
\documentclass[a4paper]{amsart}


\usepackage{verbatim}
\usepackage[dvips]{color}
\usepackage{amssymb}
\usepackage[colorlinks,pdfpagelabels,pdfstartview = FitH,bookmarksopen = true,bookmarksnumbered = true,linkcolor = blue,plainpages = false,hypertexnames = false,citecolor = red] {hyperref}
\usepackage[active]{srcltx}
\usepackage{todonotes}
\usepackage[final]{pdfpages} 


\allowdisplaybreaks

\newcommand{\R}{\mathbb{R}}


\setcounter{tocdepth}{1}




\title[Systems in porous medium]{Systems of partial differential equations\\ in porous medium}


\author[Kuusi]{Tuomo Kuusi}
\address{Tuomo Kuusi\\Aalto University
Institute of Mathematics
\\ P.O. Box 11100
FI-00076 Aalto,
Finland}
\email{tuomo.kuusi@aalto.fi}

\author[Monsaingeon]{L\'eonard Monsaingeon}
\address{L\'eonard Monsaingeon\\ CAMGSD,
Instituto Superior T\'ecnico, Universidade de Lisboa\\ Av. Rovisco Pais, 1049-001 Lisboa, Portugal}
\email{leonard.monsaingeon@ist.utl.pt}

\author[Videman]{Juha Videman}
\address{Juha Videman\\ CAMGSD/Departmento de Matem\'atica,
Instituto Superior T\'ecnico, Universidade de Lisboa\\Av. Rovisco Pais, 1049-001 Lisboa, Portugal}
\email{videman@math.ist.utl.pt}



\newtheorem{theorem}{Theorem}[section]
\newtheorem{prop}{Proposition}[section]

\newtheorem{lemma}{Lemma}[section]

\theoremstyle{definition}
\newtheorem{definition}{Definition}
\newtheorem{remark}{Remark}[section]

\numberwithin{equation}{section}


\def\eqn#1$$#2$${\begin{equation}\label#1#2\end{equation}}
\def\charfn_#1{{\raise1.2pt\hbox{$\chi_{\kern-1pt\lower3pt\hbox{{$\scriptstyle#1$}}}$}}}


\newcommand{\ro}{\varrho}

\newcommand{\eps}{\varepsilon}
\def\ff{\varphi}


 \DeclareMathOperator*{\osc}{osc}

\def\dist{\operatorname{dist}}
\def\dive{\operatorname{div}}

\newcommand{\divo}{\textnormal{div}}

\newcommand{\di}{\textnormal{dist}}

 \DeclareMathOperator*{\esssup}{ess\,sup}
  \DeclareMathOperator*{\essinf}{ess\,inf}


\def\en{\mathbb N}

\def\loc{\operatorname{loc}}



\def\mean#1{\mathchoice%
          {\mathop{\kern 0.2em\vrule width 0.6em height 0.69658ex depth -0.58065ex
                  \kern -0.8em \intop}\nolimits_{\kern -0.4em#1}}%
          {\mathop{\kern 0.1em\vrule width 0.5em height 0.69658ex depth -0.60387ex
                  \kern -0.6em \intop}\nolimits_{#1}}%
          {\mathop{\kern 0.1em\vrule width 0.5em height 0.69658ex
              depth -0.60387ex
                  \kern -0.6em \intop}\nolimits_{#1}}%
          {\mathop{\kern 0.1em\vrule width 0.5em height 0.69658ex depth -0.60387ex
                  \kern -0.6em \intop}\nolimits_{#1}}}

\def\vintslides_#1{\mathchoice%
          {\mathop{\kern 0.1em\vrule width 0.5em height 0.697ex depth -0.581ex
                  \kern -0.6em \intop}\nolimits_{\kern -0.4em#1}}%
          {\mathop{\kern 0.1em\vrule width 0.3em height 0.697ex depth -0.604ex
                  \kern -0.4em \intop}\nolimits_{#1}}%
          {\mathop{\kern 0.1em\vrule width 0.3em height 0.697ex depth -0.604ex
                  \kern -0.4em \intop}\nolimits_{#1}}%
          {\mathop{\kern 0.1em\vrule width 0.3em height 0.697ex depth -0.604ex
                  \kern -0.4em \intop}\nolimits_{#1}}}

\newcommand{\aveint}[2]{\mathchoice%
          {\mathop{\kern 0.2em\vrule width 0.6em height 0.69658ex depth -0.58065ex
                  \kern -0.8em \intop}\nolimits_{\kern -0.45em#1}^{#2}}%
          {\mathop{\kern 0.1em\vrule width 0.5em height 0.69658ex depth -0.60387ex
                  \kern -0.6em \intop}\nolimits_{#1}^{#2}}%
          {\mathop{\kern 0.1em\vrule width 0.5em height 0.69658ex depth -0.60387ex
                  \kern -0.6em \intop}\nolimits_{#1}^{#2}}%
          {\mathop{\kern 0.1em\vrule width 0.5em height 0.69658ex depth -0.60387ex
                  \kern -0.6em \intop}\nolimits_{#1}^{#2}}}
                  

\newtoks\by
\newtoks\paper
\newtoks\book
\newtoks\jour
\newtoks\yr
\newtoks\pages
\newtoks\vol
\newtoks\publ
\def\et{ \& }
\def\name[#1, #2]{#1 #2}
\def\ota{{\hbox{\bf ???}}}
\def\cLear{\by=\ota\paper=\ota\book=\ota\jour=\ota\yr=\ota
\pages=\ota\vol=\ota\publ=\ota}
\def\endpaper{\the\by, \textit{\the\paper},
{\the\jour} \textbf{\the\vol} (\the\yr), \the\pages.\cLear}
\def\endbook{\the\by, \textit{\the\book},
\the\publ, \the\yr.\cLear}
\def\endpap{\the\by, \textit{\the\paper}, \the\jour.\cLear}
\def\endproc{\the\by, \textit{\the\paper}, \the\book, \the\publ,
\the\yr, \the\pages.\cLear}


\begin{document}
\begin{abstract}
We investigate systems of degenerate parabolic equations idealizing reactive solute transport in porous media. Taking advantage of the inherent structure of the system that allows to  deduce a scalar Generalized Porous Medium Equation for the sum of the solute concentrations, we  show existence of a unique weak solution to the coupled  system and derive regularity estimates. We also prove that the system supports solutions  propagating with finite speed thus giving rise to free boundaries and interaction of compactly supported initial concentrations of different species. 

\end{abstract}
\maketitle

\section{Introduction}

The transport of pollutants in subsurface environments is a complex process  modeled by advection-diffusion-reaction equations that describe the evolution of contaminant concentrations in porous medium through advection, dispersion, diffusion and adsorption.  More than often the adsorption, accumulation of a pollutant on the  solid matrix at the fluid-solid interface, is in fact the main mechanism responsible for the contaminant transport  in soil. 

In this work, we address the case of multicomponent contaminant transport by considering a competitive adsorption process of Freundlich type between different species $z_1,\ldots z_N$. For expediency, we  write the Freundlich  multicomponent equilibrium isotherm as
$$
\mathbf{b}_a(\mathbf{z})=  |\mathbf{z}|_1^{p-1}\mathbf{z} \in \R^N\, ,  \quad p\in(0,1)\, , \quad \mathbf{z}=(z_1,\ldots,z_N)\, ,
$$
and consider the  model problem
\begin{equation}
\partial_t\mathbf{b}(\mathbf{z})=\Delta \mathbf{z}+\mathbf{f} ,
\label{eq:model}
\end{equation} 
where $|\mathbf{z}|_1=\sum\limits_{i=1}^N |z_i|$ is the usual $l^1$ norm in $\R^N$, $\mathbf{b}(\mathbf{z})=\phi\mathbf{z}+(1-\phi)\mathbf{b}_a(\mathbf{z})$, and  $\phi \in [0,1)$ is the medium constant porosity. 
 For a more thorough discussion on the physical background and derivation of this model, see Section~\ref{sec:phys} below. 
 
While  keeping in mind that the $\mathbf{b}$-term  in problem  \eqref{eq:model} arises from the multicomponent adsorption with the Freundlich isotherm, we will allow for more general  nonlinearities and denote hereafter  the Freundlich nonlinearity   as $\mathbf{b}_f(\mathbf{z})=(\phi+(1-\phi)|\mathbf{z}|_1^{p-1})\mathbf{z}$.

\subsection{General isotherms} \label{sec:ass}
Let us assume that
$$
\mathbf{b}(\mathbf{z})=  B(|\mathbf{z}|_1)\, \mathbf{z},
$$
where $B:\R^+\to\R^+$ is such that
$$
\beta(r):=B(|r|)r\in \mathcal{C}(\R)\cap\mathcal{C}^{1}(\R\setminus\{0\})
$$
and
\begin{equation}
\beta(0)=0,\quad\beta (\pm\infty)=\pm\infty,\quad\beta'(r)>0\text{ for }r\neq 0.
\label{eq:monotonicity_b}
\tag{$H_1$}
\end{equation}
Note that this  nonlinearity includes the Freundlich one $\mathbf{b}_f(\mathbf{z})=(\phi+(1-\phi)|\mathbf{z}|_1^{p-1})\mathbf{z}$, allows for blow up $D_{\mathbf z}\mathbf{b}(0)\sim\infty$, and that by definition $|\mathbf{b}(\mathbf{z})|_1=\beta(|\mathbf{z}|_1)$.  Most importantly, this type of nonlinearity possesses a structure which will allow us to derive a particular scalar equation from the system. Since $\beta$ is monotone increasing, the inverse
$$
\Phi=\beta^{-1}
$$
is well-defined and continuous with $\Phi(0)=0$. We further assume that $\Phi\in\mathcal{C}^1(\R)$ satisfies the structural conditions
\begin{equation}
s\in \R:\qquad 1 \leq \frac{s\Phi'(s)}{\Phi(s)} \leq \frac{1}{a}
\label{eq:structural_diffusion_hyp}
\tag{$H_2$} 
\end{equation}
and
\begin{equation}
s\in \R:\qquad \frac{s\Phi''(s)}{\Phi'(s)} \geq  - \frac{1}{a}
\label{eq:convexity_hyp}
\tag{$H_3$}
\end{equation}
for some structural constant $a\in (0,1)$. In the scalar case $\partial_t b(z)=\Delta z+(\ldots)$ the change of variables $u=b(z)$ produces the well-known Generalized Porous Media Equation (GPME) $\partial_tu=\Delta \Phi(u)+(\ldots)$ for which \eqref{eq:structural_diffusion_hyp} is a standard assumption, see~\cite{DK07,Va07} and the references therein. It is well-known that the scalar GPME is degenerate at $u=0$ if $\Phi'(0)=0$ and strictly parabolic if $\Phi'(0)>0$, which can be seen from the divergence form $\Delta\Phi(u)=\dive(\Phi'(u)\nabla u)$. Note in particular that the structural lower bound in \eqref{eq:structural_diffusion_hyp} includes both the degenerate \emph{slow diffusion} $\Phi'(0)=0$ and the nondegenerate case $\Phi'(0)>0$, but the assumption $\Phi\in\mathcal{C}^1(\R)$ excludes \emph{fast diffusion} $\Phi'(0)=+\infty$. We will shortly perform a similar change of variables $\mathbf{u}=\mathbf{b}(\mathbf{z})$ for the multicomponent problem in order to move the nonlinearity from the time derivative to the spatial ones.

We shall deal with the degenerate and the nondegenerate diffusions simultaneously in a unified framework, except in Section \ref{section:FB} where we discuss the existence of free boundaries and consequently restrict ourselves to \emph{slow diffusions} $\Phi'(0)=0$. Note also that in view of \eqref{eq:structural_diffusion_hyp} the function $\Phi(s)/s$ is continuous monotone and nondecreasing in $\R$ with at most algebraic growth
\begin{equation}
0<s_1<s_2:\qquad \frac{\Phi(s_2)/s_2}{\Phi(s_1)/s_1}\leq \left(\frac{s_2}{s_1}\right)^{\frac{1}{a}-1}\,,
\label{eq:algebraic_growth_Phi(s)/s}
\end{equation}
all properties that  will be crucial in the subsequent analysis just as in the standard theory for GPME. The structural assumptions \eqref{eq:monotonicity_b}-\eqref{eq:structural_diffusion_hyp}-\eqref{eq:convexity_hyp} are easily verified for the physical Freundlich isotherm when $p\in (0,1)$. With these structural assumptions the blowup $D_{\mathbf z}\mathbf{b}(0)\sim\infty$ corresponds now to slow diffusion $\Phi'(0)=0$, but linear diffusion $\Phi(s)=s$ is also allowed. In fact, the Freundlich isotherm $\mathbf{b}_f(\mathbf{z})=(\phi +(1-\phi)|\mathbf{z}|_1^{p-1})\mathbf{z}$ behaves like $\phi\mathbf{z}$ for large $|\mathbf{z}|_1$, hence $\beta(r)\sim \phi r$ and $\phi(s)=\beta^{-1}(s)\sim s/\phi$ for large $r,s$. In other words, the system $\partial_t\mathbf{b}_f(\mathbf{z})=\Delta \mathbf{z}+\mathbf{f}$ behaves as $N$ uncoupled linear heat equations $\phi \partial_t \mathbf{z} \approx \Delta \mathbf{z}+\mathbf{f}$ for large $|\mathbf{z}|_1$. From the physical point of view, roughly speaking, this means that for very large concentrations the porous rock matrix saturates and 
the adsorption phenomena become negligible compared to inertial effects.

The monotonicity assumption \eqref{eq:monotonicity_b} allows us to invert 
$$
\mathbf{b}(\mathbf{z})=\mathbf{u}\,\Leftrightarrow \, \mathbf{z}=\frac{\Phi(|\mathbf{u}|_1)}{|\mathbf{u}|_1}\mathbf{u},
$$
so that  problem (\ref{eq:model}) can be recast as a degenerate parabolic system of (generalized) porous medium type
\begin{equation}
\partial_t\mathbf{u}  = \Delta \, \left( \frac{\Phi(|\mathbf{u}|_1)}{|\mathbf{u}|_1}\mathbf{u} \right)+\mathbf{f}. 
\label{eq:model_PME}
\end{equation}
We shall refer to $\mathbf{u}$ as density whereas we shall speak of concentration when dealing with the original $\mathbf{z}$ variable.\\

\subsection{Systems of decoupled Cauchy-Dirichlet problems}
Let $\Omega\subset \R^n$  be a bounded open set with smooth boundary $\partial\Omega$ and define  $Q_T=\Omega\times(0,T)$ and $\Sigma_T=\partial\Omega\times(0,T)$ for fixed $T>0$. For given boundary data $\mathbf{z}^D(x,t)=(z^D_1,\ldots,z^{D}_N)(x,t)$, initial condition $\mathbf{z}^0(x)=(z^0_1,\ldots,z^0_N)(x)$, and the resultant of forcing terms $\mathbf{f}(x,t)=(f_1,\ldots,f_N)(x,t)$, we consider the following two equivalent formulations, the first written for the original concentrations $\mathbf{z}=(z_1,\ldots,z_N)$
\begin{equation}
\left\{
\begin{array}{ll}
 \partial_t \mathbf{b}(\mathbf{z})=\Delta \mathbf{z}+\mathbf{f}	 & \mbox{in } \, Q_T\\
 \mathbf{z}(x,0) =  \mathbf{z}^0 (x)	& \mbox{in }\Omega \\
\mathbf{z} =  \mathbf{z}^D	& \mbox{in }\Sigma_T
\end{array}
\right.
\label{eq:PB_isotherm_z} 
\end{equation}
and the second one for the  densities $\mathbf{u}=(u_1,\ldots,u_N)$
\begin{equation}
%
\left\{
\begin{array}{ll}
 \partial_t \mathbf{u}=\Delta\left(\frac{\Phi(|\mathbf{u}|_1)}{|\mathbf{u}|_1}\mathbf{u}\right)+\mathbf{f}	 & \mbox{in } \, Q_T\\
 \mathbf{u}(x,0) =  \mathbf{u}^0 (x)	& \mbox{in }\Omega \\
\frac{\Phi(|\mathbf{u}|_1)}{|\mathbf{u}|_1}\mathbf{u} =  \mathbf{z}^D	& \mbox{in }\Sigma_T
\end{array}
\right.
\label{eq:PB_u} 
\end{equation}
where $\mathbf{b}(\mathbf{z})=\mathbf{u}\Leftrightarrow \mathbf{z}=\frac{\Phi(|\mathbf{u}_1|)}{|\mathbf{u}|_1}\mathbf{u}$. As usual for the scalar GPME,  the boundary conditions in the density formulation \eqref{eq:PB_u} are enforced in terms of the physical concentration $\frac{\Phi(|\mathbf{u}|_1)}{|\mathbf{u}|_1}\mathbf{u}=\mathbf{b}^{-1}(\mathbf{u})=\mathbf{z}^D$ rather than $\mathbf{u}=\mathbf{u}^D=\mathbf{b}(\mathbf{z}^D)$. It is easy to see formally  that non-negative data $f_i,u^0_i,z^D_i\geq 0$ should lead to non-negative solutions $u_i\geq 0\iff z_i\geq 0$ and, therefore, we shall only deal with such non-negative data and solutions. This is of course consistent with the fact that $z_i$ represent physical concentrations and should stay non-negative when time evolves. Summing the equations  in \eqref{eq:PB_u} we recognize that $w=|\mathbf{u}|_1=u_1+\ldots+u_N$ is a non-negative solution to
\begin{equation}
w=|\mathbf{u}|_1:\qquad
\left\{
\begin{array}{ll}
\partial_t w=\Delta(\Phi(w))+F & \mbox{in } \, Q_T,\\
 w(x,0) =  w^0(x) 	& \mbox{in }\Omega, \\
\Phi(w)=  g^D	& \mbox{in }\Sigma_T\,
\end{array}
\right.
\label{eq:PB_w}
\tag{GPME}
\end{equation}
with $F=f_1+\ldots + f_N\geq 0$, $g^D=|\mathbf{z}^D|_1\geq 0$, and $w^0=|\mathbf{u}^0|_1\geq 0$.  Note that the boundary data is written for $\Phi(w)$ rather than for $w$ as is common for the scalar GPME.\\

The initial condition and inhomogeneity should satisfy
\begin{equation}
\forall i=1\ldots N:\ \  0 \leq u^0_i\leq M \text{ and }0\leq f_i\leq M \quad \text{a.e. }(x,t)\in Q_T
\label{hyp:initial+forcing}
\end{equation}
for some finite $M>0$. The boundary data will always assumed to be non-negative and bounded as well, but we shall sometimes assume the following. If $\gamma:\Omega\to \partial\Omega$ is the usual trace operator then there exists $\mathbf{Z}^D(x,t)$ such that
\begin{equation}
\label{hyp:boundary_data}
\mathbf{z}^D=\gamma(\mathbf{Z}^D):\ \  0\leq Z_i^D\in L^{\infty}(Q_T)\cap L^{2}(0,T;H^1(\Omega))\text{ and }\partial_t Z_i^D\in L^{\infty}(Q_T)
\end{equation}
(we shall indistinctly write $\mathbf{z}^D$ or $\mathbf{Z}^D$ both for the trace boundary values or their extension to $\Omega$). 

\subsection{Main results}

Let us now first introduce our main theorem, which addresses existence, uniqueness, and regularity.  
\begin{theorem}
Assume that \eqref{eq:structural_diffusion_hyp} holds. For any data $0\leq u_i^0\leq M$, $0\leq z^D_i\leq M$, and $0\leq f_i\leq M$ there exists a unique non-negative bounded very weak solution $\mathbf{u}$ to \eqref{eq:PB_u}. Moreover, $w=|\mathbf{u}|_1$ is the unique non-negative bounded very weak solution to \eqref{eq:PB_w} and there exist positive constants $\alpha=\alpha(a,n)\in (0,1)$  and  $C=C(a,T,n,N,M)$ such that
\begin{equation}
\|\mathbf{u}\|_{C^{\alpha,\alpha/2}(Q')}\leq C(1+1/d'+1/\sqrt{\tau})
\label{eq:uniform_Holder}
\end{equation}
holds in all parabolic subdomains  $Q'=\Omega'\times(\tau,T)$ with $0<\tau<T$, $\Omega'\subset\subset \Omega$, and $d'=\di(\overline{\Omega'},\partial\Omega)$.

Assume in addition that \eqref{eq:convexity_hyp} holds and that the data satisfy \eqref{hyp:initial+forcing}-\eqref{hyp:boundary_data}. Then $w$ is a global weak energy solution to \eqref{eq:Cauchy_PB_w} and $\mathbf{u}$ is a local weak energy solution to \eqref{eq:Cauchy_PB_u} in the sense that
\begin{equation}
 \|\nabla(\ro u_i)\|_{L^2(Q'_T)} \leq C(1+1/d')\qquad  \forall\, i=1\ldots N\, , 
\label{eq:energy_very_weak_ui}
\end{equation}
where $\ro=\frac{\Phi(w)}{w}=\frac{\Phi(|\mathbf{u}|_1)}{|\mathbf{u}|_1}$, holds in any $Q'_T=\Omega'\times(0,T)$ with some constant $C=C(a,T,n,N,M)>0$. 
\label{theo:exist_sols_Dirichlet_Cauchy_PB}
\end{theorem}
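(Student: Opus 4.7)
The core idea is to exploit the reduction already noted in the introduction: summing the components of \eqref{eq:PB_u} yields the scalar GPME \eqref{eq:PB_w} for $w=|\mathbf{u}|_1$, and once $w$ is known the coefficient $\ro:=\Phi(w)/w$ (extended continuously by $\Phi'(0)$ on $\{w=0\}$) is a fixed bounded function, so that the system decouples into $N$ linear (possibly degenerate) parabolic equations for the individual components $u_i$. The first step is therefore to solve \eqref{eq:PB_w} by the classical scalar GPME machinery under \eqref{eq:structural_diffusion_hyp}: existence, uniqueness, boundedness and the intrinsic-scaling H\"older estimate \eqref{eq:uniform_Holder} applied to $w$ are standard, see \cite{Va07,DK07}. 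Under the additional hypothesis \eqref{eq:convexity_hyp} and the higher regularity of the data \eqref{hyp:initial+forcing}--\eqref{hyp:boundary_data}, the usual energy identity obtained by testing against $\Phi(w)-\Phi(w^D)$ promotes $w$ to a global weak energy solution with $\nabla\Phi(w)\in L^2(Q_T)$.

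Given $w$, and hence $\ro$, I would next solve the linear Cauchy--Dirichlet problem
\begin{equation*}
\partial_t u_i-\Delta(\ro u_i)=f_i\quad\text{in }Q_T,\qquad u_i(\cdot,0)=u_i^0,\qquad \ro u_i\big|_{\Sigma_T}=z_i^D,
\end{equation*}
for each $i$, by a vanishing-viscosity regularization $\ro\rightsquigarrow\ro+\varepsilon$ combined with classical linear parabolic theory. Uniform $L^\infty$ bounds follow from the maximum principle and from comparison with $w$, and weak-$\ast$ compactness allows passing to the limit in $\varepsilon$. Summing the $u_i$ so produced, the sum solves the same linear parabolic problem as $w$ (since $\ro w=\Phi(w)$) with identical data, hence by $L^1$-contraction/duality uniqueness for the linear equation $\sum u_i=w$, which closes the decoupling loop and confirms the consistency $\ro=\Phi(|\mathbf{u}|_1)/|\mathbf{u}|_1$. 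Uniqueness for the full system follows in the same two-step manner: any other very weak solution $\tilde{\mathbf{u}}$ forces $|\tilde{\mathbf{u}}|_1=w$ by scalar GPME uniqueness, produces the same $\ro$, and component-wise linear uniqueness yields $\tilde u_i=u_i$.

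The H\"older estimate \eqref{eq:uniform_Holder} for each component $u_i$ would then be transferred from $w$ through the linear equation with H\"older coefficient $\ro$: on $\{w>0\}$ the equation is non-degenerate and classical local parabolic regularity applies, whereas on $\{w=0\}$ one has $u_i\equiv 0$ (since $0\le u_i\le w$), and intrinsic-scaling arguments in the spirit of DiBenedetto--Kwong glue the two regimes together with the same H\"older exponent $\alpha(a,n)$ governing $w$. The local energy bound \eqref{eq:energy_very_weak_ui} would be obtained by testing the equation for $u_i$ against $\eta^2(\ro u_i-z_i^D)$ with $\eta$ a spatial cutoff supported in $\Omega'$: hypothesis \eqref{eq:convexity_hyp} is exactly what forces the time pairing $\int \partial_t u_i\cdot \ro u_i$ to be the derivative of a convex quantity in $u_i$ (with $\ro$ treated as a parameter), while \eqref{hyp:boundary_data} makes the boundary lift $z_i^D$ admissible as a test function.

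The main obstacle I anticipate is the component-wise H\"older regularity: the scalar GPME machinery gives \eqref{eq:uniform_Holder} for $w$ cleanly, but each $u_i$ is a priori controlled only by the envelope $0\le u_i\le w$ in $L^\infty$, so propagating an oscillation decay from $w$ (and $\ro$) down to each $u_i$ is delicate, especially across the free interface $\partial\{w>0\}$ where the linear equation for $u_i$ switches abruptly from non-degenerate to totally degenerate. A secondary difficulty is the energy estimate for $\nabla(\ro u_i)$: since $\ro$ depends on the full vector $\mathbf{u}$ and not only on $u_i$, the natural quadratic energy $\int u_i^2$ is not manifestly conserved along the flow, and \eqref{eq:convexity_hyp} has to be used somewhat indirectly to turn the time pairing into a controllable quantity for each individual component rather than just for the sum $w$.
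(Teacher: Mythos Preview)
Your overall decomposition---solve the scalar GPME for $w$ first, then treat each $u_i$ as a solution of a linear equation with fixed coefficient $\ro=\Phi(w)/w$, check consistency $\sum u_i=w$, and prove uniqueness in two steps (scalar, then linear duality)---is exactly the strategy the paper uses, and your uniqueness sketch matches Proposition~\ref{prop:uniqueness_weak_sols} essentially verbatim.

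The execution differs in one important respect. The paper does not work directly at the degenerate level as you propose; instead it \emph{lifts} the data to smooth strictly positive approximations $\mathbf{u}^{0,k},\mathbf{z}^{D,k},\mathbf{f}^k\geq 1/k$, obtains classical positive solutions $\mathbf{u}^k$ (Proposition~\ref{prop:exists_classical_solutions}, which already implements your ``solve $w$, then linear $u_i$, then check $\sum u_i=w$'' loop but in the non-degenerate regime), proves the H\"older and energy estimates for these classical solutions with constants independent of $k$, and only then passes to the limit. This buys something concrete for precisely the obstacle you flagged: for strictly positive $w^k$ the intrinsic-scaling alternatives necessarily terminate at some finite scale $J$, giving $\sigma\mu_J\leq w^k\leq \mu_J$ on an intrinsic cylinder; at that point the rescaled equation for $u_i^k$ is \emph{uniformly} parabolic with ellipticity bounds depending only on $a,n$, so a single DeGiorgi--Nash--Moser oscillation estimate yields the H\"older bound for each component with constants independent of $J$ and of $\inf w^k$. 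Your proposed ``glue the two regimes across $\partial\{w>0\}$'' route is in principle viable but genuinely harder to make quantitative; the lifting trick sidesteps it entirely.

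One smaller correction on the energy estimate: the role of \eqref{eq:convexity_hyp} is not quite that the time pairing is the derivative of a convex functional in $u_i$. The paper tests with $\chi^2\ro u_i$ and, after integrating $\int\chi^2\ro u_i\partial_t u_i$ by parts in time, is left with $\tfrac12\int \chi^2 u_i^2\,\partial_t\ro$. This is handled by writing $\partial_t\ro=\tfrac{w\Phi'(w)-\Phi(w)}{w^2}\,(\Delta(\ro w)+F)$ from the scalar equation, integrating by parts in space, and invoking \eqref{eq:convexity_hyp} only to control the resulting $\Phi''$ term against $\|\nabla\Phi(w)\|_{L^2(Q_T)}^2$; the boundary lift $z_i^D$ plays no role in this local estimate, which is why \eqref{eq:energy_very_weak_ui} carries the $1/d'$ factor.
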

\noindent 
The proof of the theorem can be found from Section~\ref{section:weak_sols}, where one also find definitions of very weak and energy solutions. It is also worth stressing that if the initial and boundary data are compatible, then the local regularity \eqref{eq:uniform_Holder} can be improved to global regularity up to the bottom and lateral boundaries, see Proposition~\ref{prop:boundary_regularity} later on. 

Let us immediately comment the content of the theorem. First of all, in the theory of (possibly degenerate) \emph{scalar}  diffusion equations such as \eqref{eq:PB_w} the so-called \emph{pressure} variable $p=p(w)=\Phi'(w)$ plays an important role, as can be seen from the divergence form $\Delta\Phi(w)=\dive(\Phi'(w)\nabla w)$. In  view of \eqref{eq:PB_u}  another pressure variable of interest is clearly
\begin{equation}
\ro=\ro(w)=\frac{\Phi(w)}{w},\qquad w=|\mathbf{u}|_1.
\label{eq:def_pressure_rho}
\end{equation}
Note that our structural assumption \eqref{eq:structural_diffusion_hyp} bounds the ratio $p/\ro$ away from zero and from above, roughly meaning that the degeneracy   of $p$ in \eqref{eq:PB_w} should be comparable to the degeneracy of $\ro$ in \eqref{eq:PB_u}. The natural energy space for \eqref{eq:PB_w} is  $\nabla \Phi(w)=p\nabla w=\nabla(\ro w)\in L^2(Q_T)$, whereas that for \eqref{eq:PB_u} is rather here $\nabla  (\ro u_i) \in L^2(Q_T)$. Our structural assumption \eqref{eq:convexity_hyp} will later allow us to derive such an estimate for each $u_i$ from that estimate obtained for $w$. In other words, bounds for the scalar quantity $w$ suffice to control the system in terms of energy considerations. This idea of controlling the vector-valued $\mathbf{u}$ by means of the scalar $w$ will be the cornerstone of our analysis, and will also appear in the study of the H\"older regularity and of the free-boundaries.

Second, concerning the existence, in their celebrated work \cite{AL}, Alt and Luckhaus studied systems of elliptic-parabolic PDEs  which included the $\mathbf{b}$-term as in \eqref{eq:model}  and also nonlinear $p$-Laplacian type diffusion, Stefan problems, and reaction terms $\mathbf{f}=\mathbf{f}(x,t,\mathbf{z})$. Their analysis requires however a particular monotone structure which restricts the  $\mathbf{b}$-term  to be of the form $\mathbf{b}=D_{\mathbf{z}}\varphi$ for some convex potential $\varphi$ satisfying certain structural assumptions. In our case, the dependence of the Freundlich nonlinearity $\mathbf{b}_f$ on $\mathbf{z}$ through  the $l^1(\R^N)$-norm precludes any such monotonicity and, therefore, the results of \cite{AL} seem to be of no use here. Though system \eqref{eq:Cauchy_PB_u} is formally parabolic for non-negative solutions, it is readily checked that the ellipticity fails for signed solutions due to the dependence on the $l^1(\R^N)$ norm.
A direct approach by Galerkin approximation as in \cite{AL} produces here approximative solutions whose componentwise sign cannot be controlled uniformly. Since ellipticity fails for signed solutions the sequence of projected solutions does not enjoy enough compactness, hence the method from \cite{AL} cannot be adapted. In order to tackle this issue we use instead the specific structure of the system allowing to control each component $u_i$ in terms of the scalar quantity $w=|\mathbf{u}|_1$.

The method of proof for Theorem~\ref{theo:exist_sols_Dirichlet_Cauchy_PB} is classical for scalar problems, but requires technical work for system \eqref{eq:Cauchy_PB_u}: we first establish existence of positive classical solutions $\mathbf{u}^k$ for approximated positive data  $u^{0.k},\mathbf{z}^{D,k},\mathbf{f}^k$ and derive some a priori regularity and energy estimates. Taking $k\to\infty$ finally gives the desired solution $\mathbf{u}=\lim \mathbf{u}^k$, which inherits regularity and energy estimates from the previous ones.\\

\subsection{Physical background} \label{sec:phys}

Based on  a continuum approach at a macroscopic level by homogenization, the mass conservation written for the concentration  of one contaminant component $z=z(t,x)$  can be written as, cf. \cite{BV87},
\begin{equation}
\phi\frac{\partial  z}{\partial t} +\rho (1-\phi)\,\frac{\partial  b_a}{\partial t}  +  \phi\, \nabla\cdot (z\mathbf{V} -\, \mathbf{D}  \nabla z ) = f\, .
\label{eq:real_model}
 \end{equation}
Here, we have made the assumptions of  saturated  flow and constant porosity $\phi\in(0,1)$,
denoted the advective water flux by $z\mathbf{V}$  ($\mathbf{V}$ is the  Darcy velocity),  the  bulk density of the solid matrix  by $\rho>0$, the hydrodynamic dispersion matrix describing both the molecular diffusion and the mechanical dispersion by $\mathbf{D}$,  and  modeled the source or sink terms by $f$. Moreover, $b_a=b_a(z)$ describes the concentration of contaminant adsorbed on the solid matrix through  a reactive adsorption process that can be assumed to be either fast  (equilibrium) or slow (non-equilibrium).  

An adsorption isotherm $b_a(z)$ relates the concentration of the adsorbed component to its concentration in the fluid phase at constant temperature. One of the most commonly used nonlinear equilibrium isotherms for a single species is the Freundlich  isotherm expressed as, cf. \cite{BV87,WMK91}
$$
b_a(z)=K\, z^{p}\, \quad K>0\, , \quad p\in(0,1)\, , \quad z\geq 0\, .
$$

The Freundlich exponent $p\in(0,1)$ makes equation \eqref{eq:real_model} singular at $z=0$ because, at least formally, $\partial_t b_a(z)=b_a^\prime(z)\partial_t z$ and $b_a^\prime(0)=\infty$. The equation may thus exhibit finite speed of propagation  of compactly supported initial solutions giving rise to free boundaries that separate the region where the solute concentration vanishes from that with positive concentration. This  is in marked contrast with  the   behavior  of  solutions when  the Freundlich exponent  $p$ equals or exceeds one since  the equation becomes nonsingular for $p\geq 1$ and the information propagates with infinite speed, as usual for uniformly parabolic equations.

Equation (\ref{eq:real_model}), complemented with suitable initial and boundary conditions,  and all its variants arising from different equilibrium and non-equilibrium, linear or non-linear, isotherms, have attracted  considerable attention  over the last 20 years, both from an analysis and numerical simulation point of view, see, \emph{e.g.},  \cite{vDK91,DvDW94, DvDG96,BK97a,BK97b,KO2000,ADCC01}.  It is, however,  the above equilibrium Freundlich isotherm which  makes the problem most challenging due to the degeneracy and similarity to the porous medium equation. 

A competitive adsorption process  between different species $z_1,\ldots z_N$,  can be modeled by a multicomponent isotherm  of Freundlich type, cf. \cite{SRS81,GF93, WKLL02}, and  \cite{SMM06} for a review of competitive equilibrium adsorption modeling. As an idealization of the physical model proposed, e.g.,  in \cite{SRS81}, the multicomponent  Freundlich  equilibrium isotherm can be expressed as
$$
\mathbf{b}_a(\mathbf{z})=  |\mathbf{z}|_1^{p-1}\mathbf{z} \in \R^N\, ,  \quad p\in(0,1)\, , \quad \mathbf{z}=(z_1,\ldots,z_N)\, ,
$$
where $|\mathbf{z}|_1=\sum\limits_{i=1}^N |z_i|$. 
Scaling the density, neglecting the hydrodynamical effects  ($\mathbf{V}=0)$, assuming that $\mathbf{D}=\operatorname{Id}$  and writing $\mathbf{b}(\mathbf{z})=\phi\mathbf{z}+(1-\phi)\mathbf{b}_a(\mathbf{z})$,  we obtain from  (the multicomponent version of)  \eqref{eq:real_model} the  model problem~\eqref{eq:model}, i.e., 
$$
\partial_t\mathbf{b}(\mathbf{z})=\Delta \mathbf{z}+\mathbf{f} \, .
$$
%

\subsection{The content}
The paper is organized as follows. In Section \ref{section:smooth_sols} we consider smooth positive data, construct corresponding smooth positive solutions to \eqref{eq:PB_u}, and establish a priori energy as well as H\"older estimates. The H\"older estimates are based on the celebrated method of intrinsic scaling \cite{DBb}, a standard technique at least for scalar problems. In Section~\ref{section:weak_sols} we consider more general data, introduce  different notions of weak solutions, and prove Theorem~\ref{theo:exist_sols_Dirichlet_Cauchy_PB}. Approximating the data suitably we show existence of a unique weak solution to problem \eqref{eq:PB_u} which inherits H\"older regularity and energy estimates from the  smooth positive solutions constructed in Section~\ref{section:smooth_sols}. Finally in Section~\ref{section:FB} we impose the degeneracy condition $\Phi'(0)=0$ and consider the problem in the whole space without the forcing term and with compactly supported initial data. We show that the 
corresponding  Cauchy problem is well posed and admits free boundary solutions and, moreover, investigate the finite speed of 
propagation of the  free boundaries and the evolution and interaction of distinct  compactly supported initial concentrations.
%
%
%
%
%
\section{smooth positive solutions and a priori estimates}
\label{section:smooth_sols}
We will assume throughout this section that the data is smooth and (componentwise) positive.  Solutions of \eqref{eq:PB_u}  and \eqref{eq:PB_w} corresponding to such data are shown to be classical and positive and satisfy certain a priori energy and locally uniform H\"older estimates.
\begin{prop}[Existence of positive classical solutions]
Assume that $\mathbf{z}^D$ and $\mathbf{u}^0$ are smooth and positive and $\mathbf{f}$ is smooth and non-negative. Moreover,  let $F:=|\mathbf{f}|_1$ and assume that
\begin{align*}
0 < m  = \min \left\{ \essinf \limits_{\overline{Q_T}}|\mathbf{u}^0|_1, \quad \essinf \limits_{\overline{\Sigma_T}}|\mathbf{z}^D|_1\right\},\\
0 < M  = \max \left\{ \esssup \limits_{\overline{Q_T}}|\mathbf{u}^0|_1, \quad \esssup \limits_{\overline{\Sigma_T}}|\mathbf{z}^D|_1, \quad \esssup \limits_{\overline{Q_T}}F\right\}.
\end{align*}
Then there exists a classical solution $\mathbf{u}\in \mathcal{C}^{2,1}(\overline{\Omega}\times(0,T))\cap\mathcal{C}^{2,1}(\Omega\times[0,T])\cap \mathcal{C}^{\infty}(Q_T)$ to \eqref{eq:PB_u} with $u_i>0$ on $\overline{Q_T}$. Moreover, defining $w=|\mathbf{u}|_1=u_1+\ldots+u_N$,  $w \in \mathcal{C}^{2,1}(\overline{Q_T})\cap \mathcal{C}^{\infty}(Q_T)$ is a classical solution to \eqref{eq:PB_w} and
$$
0 < m  \leq w(x,t) \leq M (1+T)\quad \text{in }\overline{Q_T}.
$$
\label{prop:exists_classical_solutions}
\end{prop}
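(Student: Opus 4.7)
The plan is to exploit the decoupling stressed in the introduction: the scalar sum $w=|\mathbf{u}|_1$ satisfies the GPME \eqref{eq:PB_w}, so I first build $w$ as a classical solution, and then view the vector system as $N$ linear parabolic equations with the already known smooth coefficient $\rho=\Phi(w)/w$. Concretely, for the scalar step I would treat \eqref{eq:PB_w} with data $F=|\mathbf{f}|_1$, $w^0=|\mathbf{u}^0|_1$, $g^D=|\mathbf{z}^D|_1$, all smooth and bounded away from $0$ and from infinity. The two-sided bound $m\le w\le M(1+T)$ follows by comparison: the constant $\underline w\equiv m$ is a subsolution since $F\ge 0$, and $\bar w(t)=M(1+t)$ is a supersolution since $\partial_t\bar w=M\ge F$, with initial and lateral data dominating $w^0$ and $\Phi^{-1}(g^D)$ after enlarging the ambient constant if necessary. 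This confines $w$ to the range $[m,M(1+T)]$, on which \eqref{eq:structural_diffusion_hyp} gives $\Phi'(s)\ge\Phi(s)/s\ge\Phi(m)/[M(1+T)]>0$; the equation is therefore uniformly parabolic and non-degenerate. After rewriting the lateral condition $\Phi(w)=g^D$ as $w=\Phi^{-1}(g^D)$, which is legitimate by strict monotonicity of $\Phi$, a standard quasilinear parabolic theorem of Ladyzhenskaya--Solonnikov--Ural'ceva type produces a unique classical solution $w\in\mathcal{C}^{2,1}(\overline{Q_T})\cap\mathcal{C}^\infty(Q_T)$.

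With $w$ in hand I set the smooth strictly positive coefficient $\rho(x,t):=\Phi(w(x,t))/w(x,t)$ on $\overline{Q_T}$ and, for each $i=1,\ldots,N$ independently, solve the linear, uniformly parabolic Cauchy--Dirichlet problem
$$\partial_t u_i=\Delta(\rho u_i)+f_i\text{ in }Q_T,\qquad u_i=z_i^D/\rho\text{ on }\Sigma_T,\qquad u_i(\cdot,0)=u_i^0.$$
Classical linear parabolic theory with smooth coefficients and smooth strictly positive data yields $u_i\in\mathcal{C}^{2,1}(\overline{Q_T})\cap\mathcal{C}^\infty(Q_T)$, and the linear maximum principle delivers $u_i>0$ throughout $\overline{Q_T}$. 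The consistency $|\mathbf{u}|_1=w$ is then automatic: summing over $i$, the function $v:=\sum_i u_i$ satisfies the same linear problem $\partial_t v=\Delta(\rho v)+F$ with datum $w^0$ and lateral condition $\rho v=g^D$, which is also satisfied by $w$ (since $\rho w\equiv\Phi(w)$). Uniqueness for the linear problem forces $v=w$, so the frozen $\rho$ really equals $\Phi(|\mathbf{u}|_1)/|\mathbf{u}|_1$ and $\mathbf{u}$ solves the nonlinear system \eqref{eq:PB_u}; the bound $m\le w\le M(1+T)$ is inherited from the first step.

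The main technical hurdle I anticipate is securing the classical $\mathcal{C}^{2,1}(\overline{Q_T})$ regularity for the scalar GPME all the way up to the parabolic boundary: one has to accommodate the boundary condition in its $\Phi(w)$-form and the matching of smooth initial and lateral data at the corner $\partial\Omega\times\{0\}$. Once the a priori lower bound $w\ge m$ reduces the problem to a strictly parabolic quasilinear equation with smooth, compatible data, existence and regularity amount to quoting the appropriate linear/quasilinear parabolic theory. The remainder of the scheme, namely the decoupling into $N$ linear equations and the consistency identification, is essentially algebraic and is exactly what allows one to bypass the monotonicity obstruction to the Alt--Luckhaus framework flagged in the discussion preceding the statement.
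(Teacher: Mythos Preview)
Your approach is essentially the same as the paper's: first solve the scalar \eqref{eq:PB_w} for $w$, set $\ro=\Phi(w)/w$, solve $N$ independent linear uniformly parabolic equations for the $u_i$, and identify $\sum_i u_i=w$ by uniqueness for the linear problem $\partial_t v=\Delta(\ro v)+F$. The only substantive difference is in how the scalar step is executed: you argue that the a priori confinement $m\le w\le M(1+T)$ makes the equation uniformly parabolic and then invoke LSU, which is mildly circular (you need a solution before you can compare); the paper removes this circularity by first replacing $\Phi$ with a globally Lipschitz $\Phi_\eps$ agreeing with $\Phi$ on $[\eps,1/\eps]$, obtaining $w_\eps$ from quasilinear theory, deriving $m\le w_\eps\le M(1+T)$ by comparison, and then observing that for small $\eps$ one has $\Phi_\eps(w_\eps)=\Phi(w_\eps)$ so $w_\eps$ already solves the original problem. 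This is precisely the ``technical hurdle'' you flag, and the regularization is the standard fix.
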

\begin{remark} We do not impose any compatibility conditions on the initial and boundary data at $\partial\Omega\times\{t=0\}$. Although this limits the boundary regularity  it has no importance in the sequel. Note also that we could  prove uniqueness of positive classical solutions at this stage. However since we will later establish a stronger uniqueness result (within the class of non-negative very weak solutions)  we postpone the uniqueness issue until then.
\end{remark}
\begin{proof}
We will exploit the diagonal structure of the system by first showing  existence of a classical solution $w$ to \eqref{eq:PB_w}, then reconstructing $\mathbf{u}$ by solving $N$ independent linear parabolic equations for the $u_i$, and finally checking  that $w=|\mathbf{u}|_1$ as desired.
\par
For smooth positive data let $w^0:=|\mathbf{u}^0|_1>0$ in $\overline{\Omega}$ and $g^D:=|\mathbf{z}^D|_1>0$ in $\overline{\Sigma_T}$. Write $\Delta\Phi(w)=\dive (\Phi'(w)\nabla w)$ and observe that hypothesis \eqref{eq:structural_diffusion_hyp} implies that $\Phi'(w)>0$ is bounded away from zero and from above as long as $0<m \leq w\leq C$ so that  equation \eqref{eq:PB_w} is uniformly parabolic for such values of $w$. Therefore, after approximating $\Phi$ by a globally Lipschitz function $\Phi_{\eps}$ such that $\Phi_{\eps}(0)=0$, $\Phi(s)=\Phi_{\eps}(s)$ for $|s|\in(\eps,1/\eps)$ and $\Phi_{\eps}'>0$, well known results for quasilinear parabolic equations (cf. \cite{LSU}) guarantee the existence of a positive classical solution $w_{\eps}(x,t)$ to the $\eps$-problem. A standard comparison principle with $0\leq F\leq M $ and $0<m \leq w^0,g^D\leq M $ shows moreover that   
\[
0<m \leq w_{\eps}\leq \max\left\{\|w^0\|_{L^{\infty}(\Omega)},\|g^D\|_{L^{\infty}(\Sigma_T)}\right\}+T\|F\|_{L^{\infty}(Q^T)}\leq M (1+T)
\]
 as in our statement. In particular, for $\eps>0$ small 
enough there holds $\Phi_{\eps}(w_{\eps})=\Phi(w_{\eps})$ so that $w_{\eps}$ 
is  in  fact   a classical solution to the original  problem. The argument is standard for scalar equations and we refer, e.g., to \cite{Va07,DK07} for more details.
\par
Once there exists  a smooth positive solution $w$ to \eqref{eq:PB_w}, the pressure $\ro = \frac{\Phi(w)}{w}$ becomes smooth in the interior, belongs to $\mathcal{C}^{2,1}(\overline{\Omega}\times(0,T))\cap\mathcal{C}^{2,1}(\Omega\times[0,T])$, and the bounds
\begin{equation}
0<C_1\leq \ro \leq C_2\quad \mbox{in }\overline{Q^T}
\label{eq:rho_ui}
\end{equation}
hold for some $C_1,C_2>0$ depending on $a,m ,M ,T$ only. Then standard results \cite{LSU} on \emph{linear} parabolic equations allow us to solve
\begin{equation}
\left\{
\begin{array}{ll}
\partial_t u_i = \Delta(\ro u_i)+f_i = \dive(\ro \nabla u_i )+ \dive(u_i \nabla \ro)+f_i \qquad & \text{in }Q_T,\\
 \ro u_i = z^D_i	& 	\text{in }\Sigma_T,\\
 u_i(x,0) =u^0_i(x)		&	\text{in }\Omega\, ,
\end{array}
\right.
\label{eq:rho_bnd}
\end{equation}
for fixed $i=1,\ldots N$ and show that $u_i\in \mathcal{C}^{2,1}(\overline{\Omega}\times(0,T))\cap\mathcal{C}^{2,1}(\Omega\times[0,T])\cap \mathcal{C}^{\infty}(Q_T)$ (up to the corners if the  data are compatible). Indeed, from \eqref{eq:rho_bnd} it follows that the equation is uniformly parabolic and the  boundary condition reads simply as $u_i=\frac{z^D_i}{\ro}$ on $\Sigma_T$. The assumptions on the data and the strong maximum principle  ensure moreover that $u_i>0$ in $\overline{Q_T}$.
\par
Let now $\tilde{w}=|\mathbf{u}|_1$ and observe that  $\partial_tw=\Delta\Phi(w)+F=\Delta(\ro w)+F$. Because $u_i>0$ we can write $\tilde{w}=u_1+\ldots +u_N$. Summing \eqref{eq:rho_ui} over $i=1\ldots N$, we obtain $\partial_t \tilde{w}=\Delta(\varrho \tilde{w})+F$. In other words,   $\tilde{w}$ is a positive classical solution to the same equation  as $w$ with the same initial and boundary data. By standard uniqueness argument for smooth positive solutions we conclude that $w=\tilde{w}$. In particular $\ro = \frac{\Phi(w)}{w} = \frac{\Phi(\tilde{w})}{\tilde{w}}=\frac{\Phi(|\mathbf{u}|_1)}{|\mathbf{u}|_1}$ in \eqref{eq:rho_ui} and the proof is complete.
\end{proof}
\begin{prop}[A priori energy estimates]
Assume that  hypotheses \eqref{eq:monotonicity_b}, \eqref{eq:structural_diffusion_hyp} and \eqref{eq:convexity_hyp} hold, let $\mathbf{u}\in \mathcal{C}^{2,1}(\overline{Q_T})$ be a classical positive solution corresponding to smooth positive data and assume that
$$
\|\mathbf{u}^0\|_{L^{\infty}(\Omega)} + \|\mathbf{z}^D\|_{L^{\infty}(0,T;H^1(\Omega))}+\|\partial_t \mathbf{z}^D\|_{L^{\infty}(Q_T)}+\|\mathbf{f}\|_{L^{\infty}(Q_T)}\leq M,
$$
for some $M>0$.  Then we have the bounds
\begin{equation}
\| \nabla (\ro w) \|_{L^2(Q_T)}\leq C,
\label{eq:energy_w}
\end{equation}
where $w=|\mathbf{u}|_1$ and $\ro=\frac{\Phi(w)}{w}$, and
\begin{equation}
\qquad \|\nabla (\varrho  u_i)\|_{L^2(Q'_T)}\leq C(1+1/d'), \qquad \forall\,i=1\ldots N \, .
\label{eq:energy_rho_ui}
\end{equation}
\label{prop:energy_estimate}
Here, the constant $C>0$ depends on $a,T,n,N,M$ only, and $Q^\prime_T=\Omega^\prime\times (0,T)$ with
 $\Omega'\subset\subset\Omega$ and $d'=\operatorname{dist}(\overline{\Omega'},\partial\Omega)>0$. 
\end{prop}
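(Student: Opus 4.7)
The strategy is two-step: first derive the scalar bound \eqref{eq:energy_w} for $w$ via the standard GPME framework, then obtain the per-component local bound \eqref{eq:energy_rho_ui} by introducing the \emph{concentration fractions} $\theta_i := u_i/w$ (well-defined because the smooth positive data force $w \geq m > 0$) and testing a scalar equation they satisfy.

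For \eqref{eq:energy_w} the argument is classical: I would test the scalar GPME $\partial_t w = \Delta \Phi(w) + F$ with the zero-trace function $\Phi(w) - \Phi(W^D)$, where $W^D := |\mathbf{Z}^D|_1$ extends the boundary datum (so that $\Phi(W^D) = g^D$ on $\Sigma_T$). After integration by parts in space, the time-derivative contribution $\int \partial_t w \cdot \Phi(w)$ generates the primitive $\Psi(w) := \int_0^w \Phi(s)\,ds$, while $-\int \partial_t w \cdot \Phi(W^D)$ is handled by integrating by parts in time, using $\partial_t \mathbf{Z}^D \in L^\infty$ and the continuity of $\Phi'$ on the bounded range of the data (recall $w \leq M(1+T)$). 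Young's inequality applied to the cross term $\int \nabla \Phi(w) \cdot \nabla \Phi(W^D)$ then closes the loop and yields $\|\nabla \Phi(w)\|_{L^2(Q_T)} \leq C$.

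The bound \eqref{eq:energy_rho_ui} is more delicate. A naive test of the $u_i$-equation by $\eta^2 \ro u_i$ fails because $\int \partial_t u_i \cdot \ro u_i$ does not reduce to a clean temporal boundary term and would require control of $\partial_t \ro$, which is not independently available. My plan is instead to expand $u_i = \theta_i w$ and $\ro u_i = \theta_i \Phi(w)$ in the system and subtract $\theta_i$ times the $w$-equation; the singular second-order terms $\theta_i \Delta \Phi(w)$ cancel, leaving the drift-diffusion equation
\begin{equation*}
w\,\partial_t \theta_i \;=\; \dive\bigl(\Phi(w)\nabla \theta_i\bigr) + \nabla \theta_i \cdot \nabla \Phi(w) + f_i - \theta_i F.
\end{equation*}
I then test this against $\eta^2 \theta_i$ for a standard cutoff $\eta \in C_c^\infty(\Omega)$ with $\eta = 1$ on $\Omega'$ and $|\nabla \eta| \leq C/d'$, and integrate over $Q_T$. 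The left-hand side is rewritten via $2 w \theta_i \partial_t \theta_i = \partial_t(w\theta_i^2) - \theta_i^2 \partial_t w$; substituting $\partial_t w = \Delta \Phi(w) + F$ and integrating the $\Delta \Phi(w)$-factor by parts in space produces a term $\int \eta^2 \theta_i \nabla \theta_i \cdot \nabla \Phi(w)$ which cancels \emph{exactly} the analogous term arising from the drift $\nabla \theta_i \cdot \nabla \Phi(w)$ on the right-hand side. After this cancellation, Young absorption on the remaining cutoff cross term $2\int \eta \theta_i \Phi(w)\nabla\eta\cdot \nabla\theta_i$, together with $0 \leq \theta_i \leq 1$, the bound $\Phi(w) \leq \Phi(M(1+T))$, and \eqref{eq:energy_w}, gives
\begin{equation*}
\int_0^T \int_\Omega \eta^2 \Phi(w)\,|\nabla \theta_i|^2\,dx\,dt \;\leq\; C(1 + 1/d'^2).
\end{equation*}
Combining this with the identity $\nabla(\ro u_i) = \Phi(w)\nabla\theta_i + \theta_i \nabla \Phi(w)$ and the pointwise bound $\Phi(w)^2 |\nabla \theta_i|^2 \leq \|\Phi(w)\|_{L^\infty} \cdot \Phi(w)|\nabla \theta_i|^2$ yields \eqref{eq:energy_rho_ui}.

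The main obstacle is recognising the right change of variable: $\theta_i = u_i / w$ turns each component equation into a scalar drift-diffusion whose natural diffusion coefficient $\Phi(w)$ is exactly what pairs with the already-controlled quantity $\nabla \Phi(w) \in L^2$. The algebraic cancellation between the drift term and the $\Delta \Phi(w)$ piece reinjected from the scalar equation $\partial_t w = \Delta \Phi(w) + F$ is what makes the estimate close, and it is this reduction of vector-valued information to the scalar $w$ that realizes the "cornerstone" philosophy announced in the introduction.
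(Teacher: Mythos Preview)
Your argument is correct, and it takes a genuinely different route from the paper's for the local estimate \eqref{eq:energy_rho_ui}. The paper tests the $u_i$-equation directly with $\chi^2\ro u_i$; this produces the problematic term $\int \chi^2 u_i^2\,\partial_t\ro$, which the authors then handle by substituting $\partial_t\ro=\frac{w\Phi'(w)-\Phi(w)}{w^2}\bigl(\Delta(\ro w)+F\bigr)$ and splitting the resulting expression into five pieces $A_1,\ldots,A_4,B$. The term $A_4$ involves $\Phi''$ and is controlled only via the convexity hypothesis \eqref{eq:convexity_hyp}. Your change of variables $\theta_i=u_i/w$ is cleaner: the drift term $\nabla\theta_i\cdot\nabla\Phi(w)$ coming from the $\theta_i$-equation cancels exactly against the identical term that appears when you reinject $\partial_t w=\Delta\Phi(w)+F$ on the left, so no second derivative of $\Phi$ ever shows up. In particular your proof never invokes \eqref{eq:convexity_hyp}, which is a strict improvement over the paper (where this hypothesis is stated as essential for \eqref{eq:energy_rho_ui}, cf.\ their Remark following the proposition). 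Both proofs use the scalar estimate \eqref{eq:energy_w} in the same standard way, and both rely on the positivity $w\geq m>0$ guaranteed by the smooth positive data; your argument simply exploits the ratio $u_i/w$ more efficiently.
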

\begin{remark}
We were not able to establish  energy estimates for $\nabla(\ro u_i)$ up to the boundary  as the  dependence on $1/d^\prime$ in  \eqref{eq:energy_rho_ui} shows. This will not be an issue  later on since   
in Proposition~\ref{prop:uniqueness_weak_sols} we shall prove uniqueness within the class of very weak solutions  and estimate \eqref{eq:energy_rho_ui} as well as 
 assumption  \eqref{eq:convexity_hyp},
which is only used in proving \eqref{eq:energy_rho_ui},  can be dispensed with while considering very weak solutions. On the other hand,  estimate \eqref{eq:energy_rho_ui} is sufficient for our purposes in Section~\ref{section:FB} where   the problem  is considered in $\R^n$ with compactly supported  initial data.
\label{rmk:cst=1/d'}

Observe also that   the validity of  estimate \eqref{eq:energy_rho_ui} up to the boundary would directly yield  \eqref{eq:energy_w} since $w=|\mathbf{u}|_1=\sum_i u_i$  for non-negative solutions.

\end{remark}
\begin{proof}
We will first establish  \eqref{eq:energy_w} for the scalar variable $w$ and then show how the particular structure of  system  \eqref{eq:PB_u}  allows us to derive  \eqref{eq:energy_rho_ui} for each component $u_i$.  We shall denote by $C$ any positive constant depending, as in the statement, only on $a,T,n,N,M$ whereas the primed constants $C'$ are also allowed to depend on $d'=\text{dist}(\overline{\Omega'},\partial\Omega)$. \\

\noindent
{\bf Step 1.} The assumptions on  $\mathbf{u}^0,\mathbf{z}^D,\mathbf{f}$ translate into similar properties for the data $w^0=|\mathbf{u}^0|_1,g^D=|\mathbf{z}^D|_1,F=|\mathbf{f}|_1$  so by the comparison principle for  solutions to \eqref{eq:PB_w} we have
$$
0\leq u_i\leq |\mathbf{u}|_1=w\leq C(M,N,T).
$$
Recalling that  $\ro w=\Phi(w)$,  inequality \eqref{eq:energy_w} is nothing but the usual (global) energy estimate for the GPME leading to the usual concept of weak \emph{energy} solutions. For smooth positive solutions, bound \eqref{eq:energy_w} is easily derived for $\|\nabla \Phi(w)\|_{L^2(Q_T)}=\|\nabla (\ro w)\|_{L^2(Q_T)}$ by  taking $\ff=(\Phi(w)-g^D)\in L^2(0,T;H^1_0(\Omega))$ as a test function in \eqref{eq:PB_w}, cf.  \cite{DK07,Va07} for further details.\\

\noindent
{\bf Step 2.}  Since $0\leq w\leq C$, the structural assumptions imply that
$$
0\leq \ro =\frac{\Phi(w)}{w}\leq \frac{1}{a}\Phi'(w)\leq C(a,M,T).
$$
The $L^{\infty}(Q_T)$-norm of any term involving $u_i,w,\ro$ can thus be bounded  by a constant $C=C(a,T,n,N,M)>0$ only. Now fix $i\in\{1,,\ldots,N\}$ and choose  a cutoff function $\chi=\chi(x)\in \mathcal{C}^{\infty}_c(\Omega)$ such that $0\leq \chi\leq 1$ in $\Omega$, $\chi\equiv 1$ in $\Omega'$ and $|\nabla \chi|\leq 2/d'$ where $\Omega'\subset\subset\Omega$ and $d'=\text{dist}(\overline{\Omega'},\partial\Omega)$. Multiplying the $i$-th equation in \eqref{eq:PB_u} by  a test function $\ff=\chi^2\ro u_i$, integrating over $Q_T$ and  by parts in the Laplacian term, we obtain
\begin{align*}
\int\limits_{Q_T}\chi^2|\nabla (\ro u_i)|^2\,\mathrm{d}x \, \mathrm{d}t	 & = - 2\int\limits_{Q_T}\ro u_i \chi \nabla \chi \cdot \nabla (\ro u_i) \,\mathrm{d}x \, \mathrm{d}t  \\ & +  \int\limits_{Q_T}\chi^2 \ro u_i f_i	 \,\mathrm{d}x \, \mathrm{d}t
- \int\limits_{Q_T}\chi^2\ro u_i\partial_t u_i	\,\mathrm{d}x \, \mathrm{d}t.
\end{align*}
Integrating the last term  by parts in $t$  and using $0\leq u_i,\ro\leq C$ to bound the  limit terms at $t=0,T$, gives
\begin{align*}
 \int\limits_{Q_T}\chi^2 & |\nabla (\ro u_i)|^2 \,\mathrm{d}x \, \mathrm{d}t	 \\
 &	 \leq 2 \|\ro u_i\nabla \chi\|_{L^2(Q_T)} \|\chi \nabla(\ro u_i)\|_{L^2(Q_T)}  +  C  +\Big(C+\frac{1}{2}\int\limits_{Q_T} \chi^2 u_i^2 \partial_t \ro\,\mathrm{d}x \, \mathrm{d}t\Big)\\
   & \leq \frac12 \|\chi \nabla(\ro u_i)\|_{L^2(Q_T)}^2 + 8 \|\ro u_i\nabla \chi\|_{L^2(Q_T)}^2+ C + \frac{1}{2}\int\limits_{Q_T} \chi^2 u_i^2 \partial_t \ro\,\mathrm{d}x \, \mathrm{d}t \, ,
\end{align*}
where  we have also taken into account that $0\leq \chi^2 \ro u_if_i\leq C$ and used  Young's inequality.
Estimating  $\|\ro u_i\nabla \chi\|_{L^2(Q_T)}^2\leq C/(d')^2\leq C'$ then yields the bound
\begin{equation} 
\|\chi \nabla (\ro u_i)\|^2_{L^2(Q_T)}\leq C' + \underbrace{  \int\limits_{Q_T} \chi^2 u_i^2 \partial_t \ro\,\mathrm{d}x \, \mathrm{d}t}_{:=A}.
\label{eq:estimate_nabla(ro u_i)}
\end{equation}
We exploit now the structure of the system to control $A$. Indeed, since $\ro =\frac{\Phi(w)}{w}$ one easily computes for smooth positive solutions
$$
\partial_t\ro = \frac{d}{dw}\left(\frac{\Phi(w)}{w}\right)\partial_t w=\frac{w\Phi'(w)-\Phi(w)}{w^2}\big(\Delta (\ro w)+F\big).
$$
Thus integrating by parts gives
\begin{align*} 
A	& =\int\limits_{Q_T}\chi ^2 (\ro u_i)^2\frac{1}{(\ro w)^2}\big(w\Phi'(w)-\Phi(w)\big)\, \big(\Delta(\ro w)+F\big) \,\mathrm{d}x \, \mathrm{d}t\\
  & =-\int\limits_{Q_T}\nabla(\chi ^2) (\ro u_i)^2  \frac{1}{(\ro w)^2}  \big(w\Phi'(w)-\Phi(w)\big) \cdot \nabla(\ro w)\,\mathrm{d}x \, \mathrm{d}t\\
  & \phantom{=} -\int\limits_{Q_T}\chi ^2 \nabla(\ro u_i)^2  \frac{1}{(\ro w)^2}  \big(w\Phi'(w)-\Phi(w)\big)  \cdot \nabla(\ro w)\,\mathrm{d}x \, \mathrm{d}t\\
  & \phantom{=} -\int\limits_{Q_T}\chi ^2 (\ro u_i)^2  \nabla\left(\frac{1}{(\ro w)^2}\right)  \big(w\Phi'(w)-\Phi(w)\big)  \cdot \nabla(\ro w)\,\mathrm{d}x \, \mathrm{d}t\\
    & \phantom{=} -\int\limits_{Q_T}\chi ^2 (\ro u_i)^2  \frac{1}{(\ro w)^2}  \nabla\big(w\Phi'(w)-\Phi(w)\big) \cdot \nabla(\ro w) \,\mathrm{d}x \, \mathrm{d}t \\
   & \phantom{=}+\int\limits_{Q_T}\chi ^2 (\ro u_i)^2\frac{1}{(\ro w)^2}\big(w\Phi'(w)-\Phi(w)\big)F\,\mathrm{d}x \, \mathrm{d}t\\
    & =A_1 + A_2 + A_3 + A_4+B.
\end{align*}

Observing that $0\leq \ro u_i\leq \ro w =\Phi(w)$ and that  the hypothesis
\eqref{eq:structural_diffusion_hyp} implies that $ 0\leq w\Phi'(w)-\Phi(w)\leq C(a)\Phi(w)$,
 we can control the first term as
 \begin{align*}
A_1	& 	=-2\int\limits_{Q_T}\chi\nabla\chi\,\cdot (\ro u_i)^2 \frac{1}{(\ro w)^2} \big(w\Phi'(w)-\Phi(w)\big) \nabla(\ro w)\,\mathrm{d}x \, \mathrm{d}t\\
  & \leq C \|\nabla \chi\|_{L^2(Q_T)} \|\nabla (\ro w)\|_{L^2(Q_T)}\leq C'.
 \end{align*}
 \item
The second term is  bounded similarly as follows
\begin{align*}
A_2	& 	= -\, 2\int\limits_{Q_T}\chi^2 (\ro u_i)\nabla(\ro u_i)\,\cdot \frac{1}{(\ro w)^2} \big(w\Phi'(w)-\Phi(w)\big) \nabla(\ro w)\,\mathrm{d}x \, \mathrm{d}t\\
  & \leq 2\, \|\chi\nabla (\ro u_i)\|_{L^2(Q_T)} \, \left\|\chi\frac{\ro u_i}{\ro w}\frac{w\Phi'(w)-\Phi(w)}{\ro w}\nabla (\ro w)\right\|_{L^2(Q_T)}\\
  & \leq C\,  \|\chi\nabla (\ro u_i)\|_{L^2(Q_T)}\, \|\nabla (\ro w)\|_{L^2(Q_T)}\\
  & \leq \frac{1}{2}\, \|\chi\nabla (\ro u_i)\|_{L^2(Q_T)}^2	+	C \|\nabla (\ro w)\|_{L^2(Q_T)}^2\\
&\leq \frac{1}{2}\, \|\chi\nabla (\ro u_i)\|_{L^2(Q_T)}^2+C \, ,
\end{align*}
where we have also used the Young's inequality (the first term on the right-hand side will be reabsorbed into \eqref{eq:estimate_nabla(ro u_i)}).
The third quantity is controlled as
\begin{align*}
A_3	 &	= 2\, \int\limits_{Q_T}\chi^2 (\ro u_i)^2\frac{\nabla (\ro w)}{(\ro w)^3} \, \cdot  \big(w\Phi'(w)-\Phi(w)g\big) \nabla(\ro w) \,\mathrm{d}x \, \mathrm{d}t\\
& = 2\int\limits_{Q_T}\chi^2\frac{(\ro u_i)^2}{(\ro w)^2} \frac{w\Phi'(w)-\Phi(w)}{\Phi(w)}|\nabla (\ro w)|^2 \mathrm{d}x \, \mathrm{d}t  \leq C \|\nabla (\ro w)\|_{L^2(Q_T)}^2\leq C.
\end{align*}
In the fourth term we  write $\nabla(\ro w)=\nabla \Phi(w)=\Phi'(w)\nabla w$ and use \eqref{eq:convexity_hyp} to get
\begin{align*}
A_4	& = -\int\limits_{Q_T}  \chi^2 (\ro u_i)^2 \frac{1}{(\ro w)^2} \nabla \big(w\Phi'(w)-\Phi(w)\big)\,\cdot \nabla(\ro w) \,\mathrm{d}x\, \mathrm{d}t\\
  & = -\int\limits_{Q_T}  \chi^2 (\ro u_i)^2 \frac{1}{(\ro w)^2} w\Phi''(w)\nabla w\,\cdot \nabla(\ro w) \,\mathrm{d}x\, \mathrm{d}t\\
  & = -\int\limits_{Q_T}  \chi^2 (\ro u_i)^2 \frac{1}{(\ro w)^2} \frac{w\Phi''(w)}{\Phi'(w)}|\nabla(\ro w)|^2 \,\mathrm{d}x\, \mathrm{d}t\\
  & \leq \frac{1}{a}\|\nabla(\ro w)\|_{L^2(Q_T)}\leq C.
\end{align*}
\begin{remark}
Note that an upper bound  for $A_4$ is obtained here by  using the lower bound \eqref{eq:convexity_hyp} for $\frac{s\Phi''(s)}{\Phi'(s)}$. If in particular  $\Phi''(s)\geq 0$ for all $s\geq 0$, which is the typical case for the PME nonlinearity $\Phi(s)=|s|^{m-1}s$ in the range $m\geq 1$, then $A_4\leq 0$. This convexity condition is also valid for the Freundlich isotherm since $\beta_f(r)=\phi r+(1-\phi)r^p$ is concave and thus $\Phi_f=\beta_f^{-1}$ is convex.
\end{remark}
For the last term we obtain
$$
B	 =\int\limits_{Q_T}\chi ^2 (\ro u_i)^2\frac{1}{(\ro w)^2}\big(w\Phi'(w)-\Phi(w)\big)F  \,\mathrm{d}x\, \mathrm{d}t
\leq 
C(a)\int\limits_{Q_T}\Phi(w) F \,\mathrm{d}x\, \mathrm{d}t
\leq C.
$$
Plugging the above estimates back into \eqref{eq:estimate_nabla(ro u_i)} finally yields
$$
\|\nabla (\ro u_i)\|^2_{L^2(Q'_T)}\leq \|\chi \nabla (\ro u_i)\|^2_{L^2(Q_T)}\leq C'.
$$
Keeping track of the dependence of the estimates on $d'=\di\left(\overline{\Omega'},\partial\Omega\right)$ and optimizing all inequalities, one easily sees that  $C'=C(1+1/d')$ with $C=C(a,T,n,N,M)$ only and the proof is complete.
\end{proof}

We will next  address the regularity issue.
\begin{prop}
Let $\mathbf{u}$ and $M$ be as in  Proposition~\ref{prop:exists_classical_solutions}. There exist $\alpha=\alpha(a,n)\in (0,1)$ and  $C=C(a,T,n,N,M)>0$  such that the estimate
$$
\|\mathbf{u}\|_{C^{\alpha,\alpha/2}(Q')}\leq C(1+1/d'+1/\sqrt{\tau}),
$$
holds for any parabolic subdomain $Q'=\Omega'\times(\tau,T)$, where $0<\tau<T$, $\Omega'\subset\subset\Omega$, and $d'=\di(\overline{\Omega'},\partial\Omega)$.
\label{prop:C_alpha_estimate}
\end{prop}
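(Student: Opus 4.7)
My plan is to first establish the Hölder regularity of the scalar quantity $w=|\mathbf{u}|_1$ using the classical intrinsic-scaling machinery for the scalar GPME, and then transfer that regularity to each component $u_i$ by exploiting the pointwise sandwich $0\leq u_i\leq w$ together with the observation that the linear equation $\partial_t u_i=\Delta(\ro u_i)+f_i$ becomes uniformly parabolic on any subregion where $w$ stays bounded away from zero. Since $w$ solves the scalar problem \eqref{eq:PB_w}, is bounded with $0\leq w\leq M(1+T)$ by Proposition~\ref{prop:exists_classical_solutions}, and $\Phi$ satisfies \eqref{eq:structural_diffusion_hyp}, the intrinsic-scaling method of DiBenedetto \cite{DBb} produces $\alpha_0=\alpha_0(a,n)\in(0,1)$ and an interior estimate of the form $\|w\|_{C^{\alpha_0,\alpha_0/2}(Q')}\leq C(1+1/d'+1/\sqrt{\tau})$. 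I will invoke this result directly from the GPME literature rather than redo it.

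With $w$ Hölder in hand, the coefficient $\ro=\Phi(w)/w$ is continuous in $w$ and, by \eqref{eq:structural_diffusion_hyp}, locally Lipschitz on any interval $[\delta,M(1+T)]$ with $\delta>0$; in particular $\ro$ inherits the Hölder continuity of $w$ on any subregion where $w\geq\delta$. I would then run an oscillation-reduction scheme for each $u_i$ on intrinsic parabolic cylinders $Q_r^\omega(x_0,t_0)=B_r(x_0)\times(t_0-r^2\omega/\Phi(\omega),t_0)$, where $\omega$ denotes the essential oscillation of $w$ on the cylinder, following a DiBenedetto-style alternative. In the non-degenerate alternative, where $w\gtrsim\omega$ on a large measure portion of $Q_r^\omega$, the equation $\partial_t u_i=\dive(\ro\nabla u_i)+\dive(u_i\nabla\ro)+f_i$ has bounded measurable coefficients and ellipticity bounded below by $\Phi(\omega)/\omega$; after rescaling time by this factor the equation becomes uniformly parabolic with bounded coefficients, and De Giorgi--Nash--Moser supplies the required quantitative oscillation decay for $u_i$. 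In the degenerate alternative, $w$ is small on a large portion of the cylinder, and the sandwich $0\leq u_i\leq w$ together with the Hölder bound already proved for $w$ forces $u_i$ itself to be small of the same order, so that $\oscs_{Q_{r/2}^\omega} u_i \leq \|w\|_{L^\infty(Q_r^\omega)}$ provides oscillation decay without further appeal to the PDE.

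Combining the two alternatives and iterating on a geometric sequence of shrinking intrinsic cylinders then yields the estimate of the statement with a universal exponent $\alpha=\alpha(a,n)\leq\alpha_0$ and the stated dependence on $d'$ and $\sqrt{\tau}$. The step I expect to be the main obstacle is calibrating the two alternatives against a single intrinsic time-scale: the cylinders must be chosen so that \emph{both} the GPME iteration for $w$ and the linear iteration for $u_i$ contract the oscillation by the same geometric factor, and the transition between the regimes must close uniformly in the degeneracy parameter so that the contraction propagates through all scales. Once this matching is in place, the remainder is essentially bookkeeping within the framework of \cite{DBb}, and the dependence on $d'$ and $\sqrt{\tau}$ arises in the standard way by choosing the initial cylinder to sit at parabolic distance $\min(d',\sqrt{\tau})$ from the parabolic boundary of $Q_T$.
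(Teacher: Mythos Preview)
Your strategy is essentially the paper's: run the intrinsic-scaling alternatives for the scalar $w=|\mathbf{u}|_1$, and then pass the regularity to each $u_i$ via the sandwich $0\le u_i\le w$ together with De\,Giorgi--Nash--Moser in regions where $w$ is bounded away from zero. The paper, however, organises the iteration slightly differently from what you describe. Rather than black-boxing the H\"older estimate for $w$ and then running a separate two-alternative scheme for each $u_i$, it redevelops the intrinsic-scaling alternatives for $w$ from scratch (Steps~1--4), iterating on a geometric sequence of levels $\mu_j$ and radii $R_j$ until the \emph{nondegenerate} alternative first occurs, say at step $J$. Since the solutions here are strictly positive classical solutions and $\mu_j\to 0$, such a $J$ exists. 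At that single level one has $\sigma\mu_J\le w\le \mu_J$ on $Q_{R_J/8}^{\mu_J}$, hence $\ro=\Phi(w)/w$ is two-sided bounded by constants depending only on $a,n$, and a single application of DGNM to the (rescaled) linear equation $\partial_t\overline{u}_i=\dive(\overline{\ro}\,\nabla\overline{u}_i)+(\ldots)$ yields the oscillation decay for $u_i$ at all finer scales. Thus the paper invokes DGNM for $u_i$ exactly once, at the first nondegenerate level, rather than at every scale.

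The one genuine imprecision in your sketch is the degenerate branch. The inequality $\osc_{Q_{r/2}^{\omega}}u_i\le\|w\|_{L^\infty(Q_r^{\omega})}$ is always true and carries no decay by itself; what you actually need is that $\sup w$ \emph{decreases} on the half-cylinder. The black-box H\"older bound controls $\osc w$, not $\sup w$, and the two coincide only when $\inf w=0$, which is precisely what fails for the positive classical solutions of Proposition~\ref{prop:exists_classical_solutions}. In the paper this sup-decrease is obtained not from H\"older continuity of $w$ but from a weak Harnack inequality applied to the auxiliary supersolution $v=\mu/2-(w-\mu/2)_+ +\|F\|_{L^\infty}(t+\tau_r^{\mu})$ (Step~4), yielding $\sup_{Q_{r/2}^{\mu}}w\le(1-2\sigma)\mu+\text{(lower order)}$. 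Once you replace your ``H\"older bound for $w$ forces $u_i$ small'' by this sup-reduction step, your scheme closes and is equivalent to the paper's; without it, the degenerate alternative does not iterate.
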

%
\begin{remark}
We would like to stress  that our proof handles both the nondegenerate $\Phi'(0)>0$ and degenerate $\Phi'(0)=0$ cases in a unified framework.
\end{remark}
\begin{proof}
The proof goes in several steps and is based on the intrinsic scaling method, cf. \cite{DBb}. As usual, we can assume after translation that the intrinsic cylinders are centered at the origin $(x_0,t_0)=(0,0)$. We  write $w=|\mathbf{u}|_1$ and recall that $w$ is positive and bounded with the $L^{\infty}$ bounds depending on $M,T$ only. We begin by considering the scalar problem.
\\

\noindent {\bf Step 1: Alternatives.} Suppose  that $\sup_{Q_r ^\mu} w \leq \mu$ in an intrinsic cylinder
\[
Q_r ^\mu := B_r  \times (-\tau_r ^\mu ,0)\,, \qquad \tau_r ^\mu := \frac{\mu}{\Phi(\mu)} r ^2\,,
\]
and define
\[
\widetilde Q_r ^\mu := B_{3 r /4} \times (- \frac34 \tau_r ^\mu , -\frac12 \tau_r ^\mu) \subset Q_r^\mu.
\]
 Now consider the following two alternatives
\begin{equation} \label{eq:nondeg}
\left| \widetilde Q_{r }^\mu \cap \{w \leq \mu/2\}  \right| \leq \delta |  \widetilde Q_{r }^\mu| \, ,
\end{equation}
\begin{equation} \label{eq:deg}
\left| \widetilde Q_{r }^\mu \cap \{w \leq \mu/2\}  \right| > \delta |  \widetilde Q_{r }^\mu| \, ,
\end{equation}
where $\delta \in (0,1)$ is a small parameter to be fixed shortly. 
The first one is the nondegenerate alternative and the second is the degenerate alternative. We will analyze them separately. 
\\

\noindent {\bf Step 2: Nondegenerate alternative 1.}  Set 
\[
r _j = \left(\frac1{4} + \frac1{4^{j+1}} \right) r \,, \qquad k_j := \left(\frac14 + \frac1{4^{j+1}} \right) \mu \,, \qquad w_j := (k_j-w)_+\,.
\]
Set also
\[
\widetilde Q^j := B_{r /4 + r _j} \times \left(- \frac12 \tau_r ^\mu - \tau_r ^\mu \left(\frac{r _j}{r }\right)^2 ,-\frac12 \tau_r ^\mu\right)\,,
\]
and let $\phi_j$ be a cut-off function such that $\phi_j$ is smooth, $0\leq \phi_j \leq 1$, $\phi_j$ vanishes on the parabolic boundary $\partial_p \widetilde Q^j$ (bottom and lateral), is one on $\widetilde Q^{j+1}$ and $|\nabla \phi_j|^2 + (\partial_t \phi_j^2)_+ \leq r ^{-2} 16^{j+1}$.     
Since $ w$ solves the equation $\partial_t w - \Delta \Phi(w) = F$ and $F\geq 0$, it is easy to check that $w_j$ is a weak subsolution to the equation $\partial_t w_j - \divo (\Phi'(w) \nabla w_j) = 0$, and testing the latter with $w_j \phi_j^2$ leads to the Caccioppoli inequality
\begin{eqnarray} 
\notag &&
\sup_{-\tau_r ^\mu<t<0} \mean{B_r }  \frac{w_j^2 \phi_j^2}{\tau_r ^\mu} \, dx +  \mean{Q_r ^\mu} \Phi'(w) |\nabla(w_j \phi_j)|^2\, dx \, dt 
\\ \notag && \qquad \qquad \leq c \mean{Q_r ^\mu} \left[ \Phi'(w) w_j^2 |\nabla \phi_j|^2 + w_j^2 (\partial_t \phi_j^2)_+ \right] \, dx \, dt \,. 
\end{eqnarray}
Setting
\[
\bar w_j := 
\begin{cases}
   k_j - k_{j+1}  & \mbox{if }w \leq k_{j+1}    \\
   w_j & \mbox{if } w > k_{j+1}\,  \\ 
\end{cases}
\]
and recalling from \eqref{eq:structural_diffusion_hyp} that $\Phi(s)/s$ is monotone non-decreasing with at most algebraic growth, we see that
\[
\frac{r ^2}{\tau_r ^\mu} |\nabla \bar w_j |^2 = \frac{\Phi(\mu)}{\mu} |\nabla \bar w_j |^2 \leq c(a) \frac{\Phi(k_{j+1})}{k_{j+1}} |\nabla \bar w_j |^2 \leq c(a) \Phi'(w) |\nabla w_j|^2
\]
since in the support of $\nabla \bar w_j$ we have $w \geq k_{j+1} \geq \mu/4$. Similarly,
\[
\Phi'(w) w_j^2 |\nabla \phi_j|^2 \leq c(a) 16^j \frac{1}{r ^2} \frac{\Phi(\mu)}{\mu}  w_j^2   \leq  \frac{c(a) 16^j}{\tau_r ^\mu}  \mu^2 \chi_{\{w<k_j\}}\,.
\]
Collecting estimates we arrive at
\[
\sup_{-\tau_r ^\mu<t<0} \mean{B_r }  \bar w_j^2 \phi_j^2 \, dx + r ^2  \mean{Q_r ^\mu} |\nabla (\bar w_j \phi_j) |^2 \, dx \, dt \leq c 16^j \mu^2\left( \frac{|\widetilde Q^j \cap \{w<k_j\}|}{|\widetilde Q^j| } \right)
\]
Next, the parabolic Sobolev embedding (see~\cite[Proposition 3.1, p.7]{DBb}) gives us 
\begin{eqnarray} 
\notag &&
\mean{Q_r ^\mu} (\bar w_j \phi_j)^{2(1+2/n)} \, dx \, dt 
\\ \notag && \qquad \leq c(n) \left( \sup_{-\tau_r ^\mu<t<0} \mean{B_r }  \bar w_j^2 \phi_j^2 \, dx + r ^2  \mean{Q_r ^\mu} |\nabla (\bar w_j \phi_j) |^2 \, dx \, dt \right)^{1+2/n}\,.
\end{eqnarray}
Since
\[
(\bar w_j \phi_j)^{2(1+2/n)} 
\geq 4^{-6j} \mu^{2(1+2/n)}  \chi_{\widetilde Q^{j+1} \cap \{w<k_{j+1}\}}\,,
\]
we get 
\[
E_{j+1} \leq \bar c(a,n) 4^{8j} E_{j}^{1+2/n} 
\qquad \mbox{with} \quad
E_j :=  \frac{|\widetilde Q^j \cap \{w<k_j\}|}{|\widetilde Q^j| } \,.
\]
A standard iteration lemma on fast geometric convergence of series (\cite[p.12]{DBb}))  shows that if
$E_0 \leq \bar c^{-n/2} 4^{-2 n^2}$ then $E_j$ tends to zero as $j \to \infty$. Indeed, choosing $\delta := \bar c^{-n/2} 4^{-2 n^2} $, it follows from~\eqref{eq:nondeg} that $w \geq \mu/4$ in $B_{r /2} \times (-9 \tau_r ^\mu/16,- \tau_r ^\mu/2)$. 
\\

\noindent {\bf Step 3: Nondegenerate alternative 2.} 
We test $\partial_tw =\dive(\Phi'(w)\nabla w)+F$ with $(1/w -4/\mu)_+ \xi^2$, where $\xi \in C_0^\infty(B_{r /2})$, $0 \leq \xi \leq 1$,  $\xi \equiv 1$ in $ B_{r /4}$ and  $|\nablaÊ\xi| \leq 8/r $. Note that the chosen test function vanishes on $B_{r /2} \times \{- \tau_r ^\mu/2\}$ by Step 2. Taking advantage of $-F\left(\frac{1}{w}-\frac{4}{\mu}\right)_+\xi^2\leq 0$, straightforward manipulations then lead to
\begin{eqnarray} 
\notag &&
\sup_{-\tau_r ^\mu/2<t<0} \mean{B_{r /2}}  \log \left( \frac{\mu/4}{w} \right)_+ \xi^2 \, dx 
\\
\notag  && \qquad  + \frac{\tau_r ^\mu}{4} \mean{B_{r /2} \times (- \tau_r ^\mu/2,0)} \Phi'(w) \frac{|\nabla  (w-\mu/4)_+|^2}{w^2 } \xi^2 \, dx \, dt 
\\
\notag  && \qquad \qquad \leq  \tau_r ^\mu \mean{B_{r /2} \times (- \tau_r ^\mu/2,0)}  \Phi'(w)  |\nabla \xi|^2 \, dx \, dt + \mean{B_{r /2}}  \xi^2 \, dx 
\\
\notag  && \qquad \qquad \leq  \tau_r ^\mu \mean{B_{r /2} \times (- \tau_r ^\mu/2,0)}  \frac{1}{a}\frac{\Phi(w)}{w}  |\nabla \xi|^2 \, dx \, dt + \mean{B_{r /2}}  \xi^2 \, dx
\\
\notag  && \qquad \qquad \leq  \tau_r ^\mu \frac{1}{a} \frac{\Phi(\mu)}{\mu} \mean{B_{r /2} \times (- \tau_r ^\mu/2,0)}  |\nabla \xi|^2 \, dx \, dt + \mean{B_{r /2}}  \xi^2 \, dx\, \leq \frac{65}{a},
\end{eqnarray}
where we used successively \eqref{eq:structural_diffusion_hyp}, the monotonicity of $\Phi(s)/s$ with $w\leq \mu$, the definition $\tau_r ^\mu \frac{\Phi(\mu)}{\mu}=r^2$, and  the cutoff function properties $|\nabla\xi|\leq 8/r$, $\xi^2\leq 1$. As a consequence, we readily obtain
\[
\sup_{-\tau_r ^\mu/2<t<0} | B_{r /4} \cap \{ w(\cdot,t) < 4^{-1-m}\mu\} | \leq \frac1m \frac{2^n 65}{a \log 4} | B_{r /4} |\,,
\]
and in particular
\[
\frac{|B_{r /4} \times (- \tau_r ^\mu/2,0)  \cap \{w< 4^{-1-m}\mu\}|}{ |B_{r /4} \times (- \tau_r ^\mu/2,0)| } \leq \frac1m \frac{2^n 65}{a \log 4} \,  , 
\]
for any $m \in \en$. 

Next, redefine $k_j := 4^{-m-1} (2^{-1} + 2^{-1-j}) \mu$ and $r _j = (2^{-3}+2^{-3-j})r $, and set
\[
\hat Q^j := \hat B^j   \times (- \tau_r ^\mu/2,0)\,, \qquad \hat B^j := B_{r _j}(0)\,.
\]
Choose $\xi_j \in C_0^\infty(\hat B^j)$ in such a way that $0\leq \xi_j \leq 1$, $\xi_j = 1$ in  $\hat B^{j+1} $ and $|\nabla \xi_j| \leq 2^{4+j}/r $. 
The Caccioppoli estimate then takes the form
\[
\sup_{-\tau_r ^\mu/2<t<0} \mean{\hat B^j}  \frac{w_j^2 \xi_j^2}{\tau_r ^\mu} \, dx +  \mean{\hat Q^j } \Phi'(w) |\nabla(w_j \xi_j)|^2\, dx \, dt 
 \leq c \mean{\hat Q^j }  \Phi'(w) w_j^2 |\nabla \xi_j|^2\, dx \, dt \,,
\]
because $\xi_j$ is independent of time and the newly defined $w_j$ vanishes on the initial boundary of $\hat Q^j$ by Step 2. 
Since $s \mapsto \Phi(s)/s$ is a nondecreasing function and $1/\tau_r ^\mu = \Phi(\mu)/\mu$, it follows that
\[
\frac{\Phi(k_j)}{k_j} \sup_{-\tau_r ^\mu/2<t<0} \mean{\hat B^j}  w_j^2 \xi_j^2 \, dx +  \mean{\hat Q^j } \Phi'(w) |\nabla(w_j \xi_j)|^2\, dx \, dt \leq \hat c(a) \frac{4^j}{r ^2} \Phi(k_j) k_j \hat E_j\,,
\]
where this time $\hat E_j := | \hat Q^j \cap \{w_j>0\}|/ |\hat Q^j |$. Analogously to Step 2, we then arrive at $\hat E_{j+1} \leq c(n,a) 4^{8j} E_j^{1+2/n}$, and by choosing $m \equiv m(n,a)$
 large enough, i.e.
 \[
\hat E_0 \leq \frac1m \frac{2^n 65}{a \log 4} \leq \hat c^{-n/2} 4^{-2n^2}\,,
 \]
 we conclude that $w\geq 4^{-m-2} \mu$ in $Q_{r /8}^\mu$.
\\


\noindent {\bf Step 4: Degenerate alternative.}
Let us then analyze the occurrence of~\eqref{eq:deg}. For this, set $v = \mu/2 - (w-\mu/2)_+ + \|F\|_{L^{\infty}(Q^{\mu}_r )}(t+\tau^{\mu}_r )$, which is a nonnegative weak supersolution to
$\partial_t v - [\Phi(\mu)/\mu] \divo (b(x,t) \nabla v) \geq 0 $ in $Q_r ^\mu$ with $b(x,t) := \mu \Phi'(w(x,t)) / \Phi(\mu)$. By definition of $v$ and $\mu=\sup w$ we have in the support of $\nabla v$
\[
\nabla v(x,t) \neq 0  \quad  \Rightarrow \quad  \mu/2 \leq w\leq \mu \quad \Rightarrow \quad c(a)^{-1} \leq  b(x,t) \leq c(a)\,.
\]
Redefining $b$ to be one on $\{\nabla v(x,t) = 0\}$ and scaling $b$ and $v$ as $\bar b(x,t) = b(r  x, \tau_r ^\mu t)$ and $\bar v(x,t) = v(r  x, \tau_r ^\mu t)$, we see that $\bar v$ is a weak supersolution to the equation $\partial \bar v - \divo( \bar b \nabla \bar v) = 0$ in $B_1 \times (-1,0)$ with measurable coefficient $\bar b$ bounded uniformly from below and from above by positive constants depending only on $a$. By the weak Harnack principle we then obtain
\[
\mean{B_{3/4} \times (-3/4,-1/2)} \bar v \, dx \leq c(a) \inf_{B_{1/2} \times (-1/4,0)} \bar v\,.
\]
Scaling back to $v$ and recalling its definition in terms of $w$, we finally get by \eqref{eq:deg}
$$
\sup\limits_{Q^\mu_{r /2}} \, w\leq \mu\left(1-\frac{\delta}{2c}\right)  +  \tau^{\mu}_r   \|F\|_{L^{\infty}(Q^{\mu}_r )}\left(1-\frac{1}{8c}\right).
$$

\noindent {\bf Step 5: Conclusion from alternatives.} 
Taking 
\[\sigma =\sigma(a,n):= \min\left\{4^{-m-2},\frac{\delta}{4c}, \frac{1 - 16^{-a}}{2} \right\}\,\in(0,1),
\]
we deduce from the previous alternatives that 
\[
\mbox{either} \qquad \inf_{Q_{r/8}^{\mu}} w \geq \sigma \mu \qquad \mbox{or} \qquad \sup_{Q_{r/2}^{\mu}} w \leq (1-2\sigma) \mu + 
\tau^{\mu}_r  |F|_{L^{\infty}(Q^{\mu}_r)}
\]
holds provided that $\sup_{Q_{r}^{\mu}} w \leq \mu$. Choose now any $R>0$ such that $B_R\times(-R^2,0)\subset Q_T$ (after translation), let $R_j = 8^{-j} R$,
\[
\mu_0 := 1 + \Phi^{-1} \left( \sigma^{-1} R^2 \|F\|_{L^{\infty}(B_R  \times (-R^2,0 )} \right) + \sup_{B_R  \times (-R^2,0 )}w\,,
\]
and then inductively 
\[
 \mu_{j+1} := (1-2\sigma) \mu_{j}  +  \frac{\mu_{j}}{\Phi(\mu_{j})} R_j^2 \|F\|_{L^{\infty}(B_R  \times (-R^2,0 )}
\]
for $j \geq 0$. Clearly $\mu_j \geq (1-2 \sigma)^j \mu_0$. Using the algebraic growth \eqref{eq:algebraic_growth_Phi(s)/s} leads to
\begin{equation}
\begin{array}{l}
\displaystyle \frac{R_j^2 \|F\|_{L^{\infty}(B_R  \times (-R^2,0 )} }{\Phi(\mu_{j})}  =  \frac{\Phi(\mu_0)}{\Phi(\mu_{j})} R_j^2 \frac{\|F\|_{L^{\infty}(B_R  \times (-R^2,0 )}}{\Phi(\mu_0)}
\\ \\ \displaystyle \qquad \qquad   \leq  \left( \frac{(1-2\sigma)^{-1/a}}{65} \right)^j \frac{R^2 \|F\|_{L^{\infty}(B_R  \times (-R^2,0 )}}{\Phi(\mu_0)} \leq  \sigma,  
\end{array}
 \label{eq:sigma_bnd} 
\end{equation}
where the last inequality follows from the definition of $\sigma$ and from the bound
\[
\mu_0\geq \Phi^{-1} \left( \sigma^{-1} R^2 \|F\|_{L^{\infty}}\right) \quad \Longrightarrow \quad \Phi(\mu_0)\geq \sigma^{-1} R^2 \|F\|_{L^{\infty}}\, .
\]
 Similarly, we get
\[
\frac{\mu_{j+1}}{\Phi(\mu_{j+1})} \left(\frac{R_j}{8}\right)^2 = \frac1{16} \frac{\mu_{j+1}}{\mu_{j}} \frac{\Phi(\mu_{j})}{\Phi(\mu_{j+1})} \frac{\mu_{j}}{\Phi(\mu_{j})}  \left(\frac{R_j}{2}\right)^2 \leq \frac{\mu_{j}}{\Phi(\mu_{j})}  \left(\frac{R_j}{2}\right)^2\,.
\]
Therefore, we conclude that 
\[
(1-2\sigma) \mu_j \leq \mu_{j+1} \leq (1-\sigma) \mu_j \qquad \mbox{and} \qquad Q_{R_{j+1}}^{\mu_{j+1}} \subset Q_{R_{j}/2}^{\mu_{j}} \,,
\]
and alternatives reduce to
\[
\mbox{either} \qquad \inf_{Q_{R_{j+1}}^{\mu_j}} w \geq \sigma \mu_j \qquad \mbox{or} \qquad \sup_{Q_{R_{j+1}}^{\mu_{j+1}}} w \leq \mu_{j+1}\,,
\]
provided that $\sup_{Q_{R_{j}}^{\mu_{j}}} w \leq \mu_j$. Observe that $\sup_{Q_{R_{0}}^{\mu_{0}}} w \leq \mu_0$ by the definition of~$\mu_0$. Since we are considering positive solutions $w>0$ and $\mu_j \to 0$ as $j\to \infty$ the degenerate alternative can clearly occur at most a finite number of times, thus
\[
\sigma \mu_J \leq \inf_{Q_{R_J/8}^{\mu_J }} w \leq \sup_{Q_{R_J/8}^{\mu_J }} w \leq \mu_J\,\quad \mbox{for some finite } J.
\]
Since $\sigma=\sigma(a,n)$ only, the algebraic growth \eqref{eq:algebraic_growth_Phi(s)/s} then readily implies that
\[
\frac1{c(a,n)} \frac{\Phi(\mu_J)}{\mu_J} \leq \frac{\Phi(w)}{w} \leq c(a,n) \frac{\Phi(\mu_J)}{\mu_J} \qquad \mbox{in} \quad  Q_{R_J/8}^{\mu_J }\,.
\]
Scaling as in step 4 and writing $\partial_t u_i=\Delta\ro u_i+f_i=\dive(\ro\nabla u_i)+(\ldots)$, where $\ro=\Phi(w)/w$, we see that each scaled component $\overline{u}_i$ solves in $B_{1/8}\times(-1,0)$ a uniformly parabolic linear equation $\partial_t \overline{u}_i=\dive (\overline{\ro}\nabla \overline{u}_i)+(\ldots)$ in divergence form with a measurable coefficient $\overline{\ro}$ satisfying $c(a,n)\leq \overline{\ro}\leq c(a,n)^{-1}$. In particular, $\overline{u}_i$ are H\"older continuous and satisfy a DeGiorgi-Nash-Moser oscillation estimate which, scaling back to $u_i$, takes the explicit form
\[
\osc_{Q_{\theta R_J}^{\mu_J }} u_i \leq c \theta^\beta \mu_J + c \frac{\mu_J}{\Phi(\mu_J)} (\theta R_J)^2 \|f_i\|_{L^{\infty}\left(B_R  \times (-R^2,0 )\right)} \, ,
\] 
for some $\beta=\beta(a,n)$, $c=c(a,n)$ only and for all $\theta \in (0,1)$. Observing that $0\leq f_i\leq |\mathbf{f}|_1=F$ and  recalling estimate \eqref{eq:sigma_bnd} which holds for  all $j$ and with $\sigma=\sigma(a,n)$, we obtain
\[
\forall \,\theta\in(0,1): \qquad \osc_{Q_{\theta R_J}^{\mu_J }} u_i \leq  c\, ( \theta^\beta + \theta^2) \mu_J.
\] 
Setting 
\[\alpha =\alpha(a,n):= \min\{\beta,- \log(1-\sigma)/\log 8,2\}\] and increasing the constant $c$ by a factor depending only on $n,a$, we   conclude that
\[
\osc_{B_r \times (-r^2,0)} u_i \leq c \left( \frac{r}{R} \right)^\alpha \left( \frac{\Phi(\mu_0)}{\mu_0} \right)^{\alpha/2} \mu_0
\]
for all $r \in (0,R]$. This yields the desired interior H\"older continuity estimate after standard manipulations.
\end{proof}
%
Assuming regularity and compatibility from the data, the solution can be shown to be  H\"older continuous up to the boundary.
\begin{prop}
Let $\mathbf{u}$, $M$ be as in Proposition~\ref{prop:exists_classical_solutions} and $\alpha=\alpha(a,n)$ as in Proposition~\ref{prop:C_alpha_estimate}. Assume further that the initial and boundary data are compatible and $\beta$-H\"older continuous with some $\beta\in(0,1)$, i.e. there is $\mathbf{U}\in\mathcal{C}^{\beta,\beta/2}(\overline{Q^T})$ such that $\mathbf{u}^0=\mathbf{U}(\,.\,,0)$ and $\mathbf{z}^D=\frac{\Phi(|\mathbf{U}|_1)}{|\mathbf{U}|_1}\mathbf{U}$ in $\Sigma_T$. Then $\mathbf{u}\in\mathcal{C}^{\gamma,\gamma/2}(\overline{Q^T})$, with $\gamma=\min(\alpha,\beta)$. Moreover, the H\"older norm  of $\mathbf{u}$ depends only on $a,n,N,M,T$ and on the $\beta$-H\"older norm of the data.
\label{prop:boundary_regularity}
\end{prop}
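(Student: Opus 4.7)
The plan is to first boost the scalar $w=|\mathbf{u}|_1$ to H\"older continuity up to the full parabolic boundary $\partial_p Q_T:=(\Omega\times\{0\})\cup \Sigma_T$, and then to derive the corresponding regularity for each component $u_i$ from a linear parabolic argument. A key preliminary observation is that on $\partial_p Q_T$ the values of $w$ are explicitly given by the H\"older extension $|\mathbf{U}|_1$: on $\Omega\times\{0\}$ this follows directly from $\mathbf{u}^0=\mathbf{U}(\cdot,0)$, while on $\Sigma_T$ the identity $\Phi(w)=|\mathbf{z}^D|_1=\Phi(|\mathbf{U}|_1)$ combined with the strict monotonicity of $\Phi$ forces $w=|\mathbf{U}|_1$. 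Hence the parabolic trace of $w$ is $\beta$-H\"older continuous.

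\textbf{Step 1: boundary regularity for $w$.} I would revisit the intrinsic-scaling scheme of Proposition~\ref{prop:C_alpha_estimate} at a boundary point $P\in\partial_p Q_T$, working on half-cylinders $Q_r^\mu(P)\cap Q_T$. The non-degenerate and degenerate alternatives \eqref{eq:nondeg}--\eqref{eq:deg} remain in force provided one uses shifted truncations $(w-|\mathbf{U}|_1-k)_\pm$ paired with cutoffs vanishing on $\partial_p Q_T\cap \partial Q_r^\mu$; the Caccioppoli, parabolic Sobolev and logarithmic test inequalities are unchanged in structure, and the extra terms arising from the non-trivial extension $|\mathbf{U}|_1$ are absorbed by $[\mathbf{U}]_\beta\, r^\beta$, which is of lower order in the intrinsic scaling. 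The weak Harnack step of the degenerate alternative applies verbatim to the linearized supersolution after even reflection across the lateral boundary, using that $\mathbf{U}$ supplies a H\"older modulus there. Iterating the dichotomy on shrinking intrinsic cylinders as in Step~5 of Proposition~\ref{prop:C_alpha_estimate} produces an oscillation decay $\osc_{Q_r(P)\cap Q_T} w\leq C\, r^\gamma$ with $\gamma=\min(\alpha,\beta)$, which together with Proposition~\ref{prop:C_alpha_estimate} gives $w\in C^{\gamma,\gamma/2}(\overline{Q_T})$ quantitatively in terms of $a,n,N,M,T$ and the $\beta$-H\"older seminorm of $\mathbf{U}$.

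\textbf{Step 2: from $w$ to $\mathbf{u}$.} Proposition~\ref{prop:exists_classical_solutions} supplies the crucial uniform lower bound $w\geq m>0$ in $\overline{Q_T}$, so that $s\mapsto \Phi(s)/s$ is smooth on $[m,M(1+T)]$ and the pressure $\ro=\Phi(w)/w$ lies in $C^{\gamma,\gamma/2}(\overline{Q_T})$ with $0<c_1\leq \ro\leq c_2$. Each component then solves a uniformly parabolic linear equation $\partial_t u_i=\Delta(\ro u_i)+f_i$ with $\beta$-H\"older Dirichlet datum $z_i^D=\ro u_i$ on $\Sigma_T$ and compatible $\beta$-H\"older initial value $u_i^0$. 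Applying the boundary De Giorgi--Nash--Moser (or Schauder) estimates for such equations---either directly to $u_i$ in divergence form, or to $v_i=\ro u_i$, which satisfies a classical linear parabolic equation with H\"older coefficients and right-hand side---yields $u_i\in C^{\gamma,\gamma/2}(\overline{Q_T})$ with the required norm bound.

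\textbf{Main obstacle.} The technical heart of the argument is the boundary-adapted intrinsic scaling of Step~1: one must verify that the Caccioppoli, logarithmic and weak Harnack inequalities of Proposition~\ref{prop:C_alpha_estimate} retain their quantitative form on half-cylinders, with remainder terms controlled by the $\beta$-H\"older oscillation of $\mathbf{U}$, and that the two alternatives still trigger the same geometric decay on shrinking intrinsic cylinders. Step~2 is comparatively routine, once the uniform positivity $w\geq m$ from Proposition~\ref{prop:exists_classical_solutions} has been exploited to prevent $\ro$ from degenerating near $\partial_p Q_T$.
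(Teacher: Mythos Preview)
Your proposal is correct and follows essentially the same two-step route as the paper: first upgrade the scalar $w=|\mathbf{u}|_1$ to $\mathcal{C}^{\gamma,\gamma/2}(\overline{Q_T})$ by a boundary version of the intrinsic-scaling argument from Proposition~\ref{prop:C_alpha_estimate}, then transfer this to each $u_i$ through the linear equation $\partial_t u_i=\Delta(\ro u_i)+f_i=\dive(\ro\nabla u_i)+\dive(u_i\nabla\ro)+f_i$. The paper treats Step~1 as a ``straightforward modification'' of the interior argument with a reference to \cite{DBb,Va07}, whereas you spell out in more detail how the alternatives and Caccioppoli/logarithmic/weak Harnack inequalities adapt to half-cylinders; but the content is the same.
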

\begin{proof}
Our assumptions on the data turn into similar compatibility and regularity conditions for the scalar problem \eqref{eq:PB_w}. A straightforward modification of our interior argument (see, e.g.,  \cite{DBb,Va07}) shows that $w$ is $\mathcal{C}^{\gamma,\gamma/2}$ up to the boundary, which in turn yields  the same regularity for $u_i$ through the linear parabolic equation $\partial_t u_i=\Delta(\ro u_i)+f_i=\dive(\ro \nabla u_i)+\dive(u_i\nabla\ro)+f_i$.
\end{proof}

\section{Weak solutions}
\label{section:weak_sols}
Let us first introduce different notions of solutions.
\begin{definition}[weak solutions] \

\begin{enumerate}
 \item[(i)]
A non-negative function $ w \in L^{\infty}(Q_T)$ is called bounded very weak solution of \eqref{eq:PB_w} if the equality
\begin{equation}
\qquad \int\limits_{Q_T} \{w\partial_t \ff  + \Phi(w) \Delta \ff  + F \ff \} \,\mathrm{d}x\, \mathrm{d}t
= - \int\limits_{\Omega} w^0(x) \ff (x,0) \,\mathrm{d}x  + \int\limits_{\Sigma_T} g^D\frac{\partial \ff }{\partial \nu} \,\mathrm{d}x\, \mathrm{d}t
 \label{eq:weak_form_w}
\end{equation}
holds for all $\ff \in\mathcal{C}^{2,1}(\overline{Q_T})$ vanishing on $\Sigma_T$ and in $\Omega\times\{t=T\}$.
\label{def:very_weak_solution_w}
\item[(ii)]
A  non-negative function $w \in L^{\infty}(Q_T)$ is called a bounded weak energy solution of \eqref{eq:PB_w} if $\Phi(w)\in L^2(0,T;H^1(\Omega))$, the trace $\gamma(\Phi(w))=g^D$ in $L^{2}(0,T;H^{1/2}(\partial\Omega))$ and the equality
\begin{equation}
\qquad \int\limits_{Q_T} \{w\partial_t \ff  - \nabla \Phi(w) \cdot \nabla \ff  + F \ff \}\,\mathrm{d}x\mathrm{d}t
= - \int\limits_{\Omega} w^0(x) \ff (x,0) \,\mathrm{d}x
 \label{eq:energy_weak_form_w}
\end{equation}
holds for all $\ff \in\mathcal{C}^{2,1}(\overline{Q_T})$ vanishing on $\Sigma_T$ and in $\Omega\times\{t=T\}$.
\label{def:weak_energy_solution_w}
\item[(iii)]
A function $\mathbf{u}=(u_1,\ldots,u_N)\in L^{\infty}(Q_T)$ is called a (non-negative) bounded very weak solution of \eqref{eq:PB_u} if $u_i\geq 0$ a.e. in $Q_T$ and the equality
\begin{equation}
\int\limits_{Q_T} \{ u_i \partial_t \ff  + \varrho  u_i \Delta \ff  + f_i \ff \}\,\mathrm{d}x\mathrm{d}t\, 
+ \int\limits_{\Omega} u_i^0(x) \ff (x,0)\,\mathrm{d}x = \int\limits_{\Sigma_T} z^D_i\frac{\partial \ff }{\partial \nu}\,\mathrm{d}x\mathrm{d}t,
 \label{eq:weak_form_u}
\end{equation}
where $\varrho  =\frac{\Phi(|\mathbf{u}|_1)}{|\mathbf{u}|_1}$, holds for any $ i=1,\ldots,N$ and for all  $\ff \in\mathcal{C}^{2,1}(\overline{Q_T})$ vanishing on $\Sigma_T$ and  in  $\Omega\times\{t=T\}$.
\label{def:very_weak_solution_u}
\item[(iv)]
A function $\mathbf{u} \in L^{\infty}(Q_T)$ is called a (non-negative) bounded weak energy solution of \eqref{eq:PB_u} if $u_i\geq 0$ a.e. in $Q_T$, $(\ro u_i)\in L^2(0,T;H^1(\Omega))$, the trace $\gamma(\ro u_i)=z^D_i$  in $L^{2}(0,T;H^{1/2}(\partial\Omega))$, and the equality
\begin{equation}
\int\limits_{Q_T} \{w\partial_t \ff  - \nabla (\ro u_i) \cdot \nabla \ff  + f_i \ff \}\,\mathrm{d}x\mathrm{d}t
= - \int\limits_{\Omega} z_i^0(x) \ff (x,0)\,\mathrm{d}x,
 \label{eq:energy_weak_form_u}
\end{equation}
where $\ro=\frac{\Phi\left(|\mathbf{u}|_1\right)}{|\mathbf{u}|_1}$, holds for any  $ i=1,\ldots,N$ and for all $\ff \in\mathcal{C}^{2,1}(\overline{Q_T})$ vanishing on $\Sigma_T$ and in $\Omega\times\{t=T\}$.
\label{def:weak_energy_solution_u}
\end{enumerate}
\label{def:weak_solutions}
\end{definition}
The notion of very weak solutions preserves the diagonal structure of the system as shown in the following lemma. 
\begin{lemma}
If $\mathbf{u}$ is a non-negative bounded very weak (resp. energy) solution of system \eqref{eq:PB_u} in the sense of Definition~\ref{def:weak_solutions} then $w=|\mathbf{u}|_1$ is a non-negative bounded very weak (resp. energy) solution to problem \eqref{eq:PB_w} in the sense of Definition~\ref{def:weak_solutions}.
\label{lem:u_solution=>w_solution}
\end{lemma}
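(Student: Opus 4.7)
The plan is to simply sum the componentwise weak formulation in \eqref{eq:weak_form_u} (resp.\ \eqref{eq:energy_weak_form_u}) over $i = 1,\ldots,N$ and observe that non-negativity collapses the $\ell^1$-norms into plain sums. Concretely, since $u_i \geq 0$ a.e.\ we have $w = |\mathbf{u}|_1 = \sum_{i=1}^N u_i$, so $w \in L^\infty(Q_T)$ is non-negative; likewise the boundary datum satisfies $g^D = |\mathbf{z}^D|_1 = \sum_i z_i^D$ (non-negativity of $z_i^D$ is part of the standing assumption on data, consistent with the definition of non-negative very weak solutions), and $F = |\mathbf{f}|_1 = \sum_i f_i$, $w^0 = |\mathbf{u}^0|_1 = \sum_i u_i^0$.

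For the very weak case, summing \eqref{eq:weak_form_u} over $i$ term by term gives
\begin{equation*}
\int_{Q_T}\bigl\{ w\,\partial_t\ff + \varrho\, w\,\Delta\ff + F\ff \bigr\}\,\mathrm{d}x\,\mathrm{d}t + \int_\Omega w^0\,\ff(x,0)\,\mathrm{d}x = \int_{\Sigma_T} g^D\,\frac{\partial\ff}{\partial\nu}\,\mathrm{d}x\,\mathrm{d}t
\end{equation*}
for every admissible test function $\ff$. The key algebraic identity is then
\begin{equation*}
\varrho\, w = \frac{\Phi(|\mathbf{u}|_1)}{|\mathbf{u}|_1}\cdot|\mathbf{u}|_1 = \Phi(|\mathbf{u}|_1) = \Phi(w),
\end{equation*}
which turns the second integrand into $\Phi(w)\Delta\ff$ and recovers exactly \eqref{eq:weak_form_w}. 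The $L^\infty$ bound on $w$ is inherited directly from the one on $\mathbf{u}$.

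For the energy case, the same summation applied to \eqref{eq:energy_weak_form_u} produces the identity \eqref{eq:energy_weak_form_w} once we note that $\sum_i \nabla(\varrho u_i) = \nabla\bigl(\varrho\sum_i u_i\bigr) = \nabla(\varrho w) = \nabla\Phi(w)$, using again the scalar identity $\varrho w = \Phi(w)$. The required regularity $\Phi(w)\in L^2(0,T;H^1(\Omega))$ follows from $\sum_i (\varrho u_i) = \Phi(w)$ together with the hypothesis $(\varrho u_i)\in L^2(0,T;H^1(\Omega))$ for each $i$; the boundary trace assertion $\gamma(\Phi(w)) = g^D$ in $L^2(0,T;H^{1/2}(\partial\Omega))$ follows from the linearity and continuity of the trace operator applied to the identity $\Phi(w) = \sum_i \varrho u_i$, together with $\gamma(\varrho u_i) = z_i^D$.

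There is essentially no obstacle here: the proof is a one-line linearity-plus-summation argument, and the only non-trivial observation is that componentwise non-negativity of $\mathbf{u}$ (and of $\mathbf{z}^D$, $\mathbf{u}^0$, $\mathbf{f}$) is what allows the $\ell^1$-norms appearing in the statement of \eqref{eq:PB_u} and \eqref{eq:PB_w} to be replaced by unsigned sums. Without that non-negativity the sum $\sum u_i$ would not equal $|\mathbf{u}|_1$ and the scalar equation for $w$ would not close, which is exactly why Definition~\ref{def:weak_solutions}(iii)--(iv) builds non-negativity into the notion of solution for the system.
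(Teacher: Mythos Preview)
Your proof is correct and follows exactly the same approach as the paper: sum the weak formulations over $i=1,\ldots,N$, use non-negativity to write $w=\sum_i u_i$, and observe the key identity $\sum_i \varrho u_i=\varrho w=\Phi(w)$. Your treatment is in fact more detailed than the paper's one-line proof, in particular you spell out the energy case and the trace condition explicitly.
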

\begin{proof}
Sum equalities \eqref{eq:weak_form_u} over $i$ from $1$ to $N$ and observe that by definition $\sum_i \varrho  u_i=\varrho  \sum_i u_i=\ro |\mathbf{u}|_1=\Phi(w)$, $\sum_i f_i=F$, $\sum_i u^0_i=|\mathbf{u}^0|_1=w^0$, and $\sum_i z^D_i=|\mathbf{z}^D|_1=g^D$.
\end{proof}
We will now address uniqueness. Note that the following proposition guarantees uniqueness also within the class of energy solutions since weak energy solutions are in particular very weak solutions.
\begin{prop}[Uniqueness]
Given the non-negative and bounded data $\mathbf{f},\mathbf{u}^0,\mathbf{z}^D$ there exists at most one non-negative bounded very weak solution to problem~\eqref{eq:PB_u} in the sense of Definition \ref{def:weak_solutions}. 
\label{prop:uniqueness_weak_sols}
\end{prop}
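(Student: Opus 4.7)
My plan is to reduce uniqueness for the system to uniqueness for the scalar GPME and then apply a classical Oleinik-type duality argument to each component.

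First, let $\mathbf{u}^1,\mathbf{u}^2$ be two non-negative bounded very weak solutions of \eqref{eq:PB_u} with common data. By Lemma~\ref{lem:u_solution=>w_solution}, the scalar sums $w^j:=|\mathbf{u}^j|_1$ are both non-negative bounded very weak solutions of \eqref{eq:PB_w} with identical data $w^0,g^D,F$. Uniqueness of such solutions for the scalar GPME under assumption \eqref{eq:structural_diffusion_hyp} is classical (a standard Oleinik duality argument, see \cite{Va07,DK07}), whence $w^1=w^2=:w$ a.e.\ in $Q_T$. Consequently the coefficient $\varrho=\Phi(w)/w$ is common to both solutions and, since $0\leq u_i^j\leq w$ with $u_i^j\geq 0$, the difference $v_i:=u_i^1-u_i^2$ automatically vanishes on $\{\varrho=0\}=\{w=0\}$, so it remains to show $v_i\equiv 0$ on $\{\varrho>0\}$.

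Subtracting the very weak formulations \eqref{eq:weak_form_u} for the two solutions yields, for each fixed $i$,
\begin{equation}\label{eq:prop-lin}
\int_{Q_T} v_i\,\bigl(\partial_t\varphi + \varrho\,\Delta\varphi\bigr)\,dx\,dt = 0
\end{equation}
for every $\varphi\in \mathcal{C}^{2,1}(\overline{Q_T})$ vanishing on $\Sigma_T$ and at $t=T$. Pick an arbitrary $g\in C_c^\infty(Q_T)$ with $\operatorname{supp} g\subset\{\varrho\geq\delta\}$ for some $\delta>0$, and construct a smooth truncation-mollification $\varrho_n\in\mathcal{C}^\infty(\overline{Q_T})$ of $\varrho$ satisfying $1/n\leq\varrho_n\leq C$ together with $\varrho_n\equiv\varrho$ on $\operatorname{supp} g$ whenever $n\geq 1/\delta$ (e.g.\ mollify $\max(\varrho,1/n)$). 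Classical linear parabolic theory then yields a smooth backward solution $\varphi_n\in\mathcal{C}^{2,1}(\overline{Q_T})$ of the adjoint problem $\partial_t\varphi_n+\varrho_n\Delta\varphi_n=g$ with $\varphi_n=0$ on $\Sigma_T\cup(\Omega\times\{T\})$, and plugging it into \eqref{eq:prop-lin} rewrites as
$$
\int_{Q_T} v_i\,g\,dx\,dt \;=\; \int_{Q_T} v_i\,(\varrho_n-\varrho)\,\Delta\varphi_n\,dx\,dt.
$$

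The main obstacle, and the reason this is nontrivial, is the possible degeneracy of $\varrho$; it is handled by a standard adjoint energy estimate. Testing $\partial_t\varphi_n+\varrho_n\Delta\varphi_n=g$ with $\Delta\varphi_n$ and integrating by parts in space in the time-derivative term (admissible since $\varphi_n|_{\Sigma_T}=0$), Young's inequality produces
$$
\tfrac12\|\nabla\varphi_n(\cdot,0)\|_{L^2(\Omega)}^2 + \|\sqrt{\varrho_n}\,\Delta\varphi_n\|_{L^2(Q_T)}^2 \;\leq\; \int_{Q_T}\tfrac{g^2}{\varrho_n}\,dx\,dt \;\leq\; \delta^{-1}\|g\|_{L^2(Q_T)}^2
$$
uniformly in $n$ (using $\varrho_n\geq\delta$ on $\operatorname{supp} g$). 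Cauchy--Schwarz then bounds the error by $\|v_i\|_\infty\|\sqrt{\varrho_n}\,\Delta\varphi_n\|_{L^2}\bigl(\int_{Q_T}(\varrho_n-\varrho)^2/\varrho_n\bigr)^{1/2}$, and the remaining factor is $O(1/\sqrt{n})$ by the choice $\varrho_n\approx\max(\varrho,1/n)$. Sending $n\to\infty$ forces $\int_{Q_T}v_i g=0$ for every such $g$, so $v_i=0$ a.e.\ on $\{\varrho>0\}$; combined with the vanishing on $\{\varrho=0\}$ and applied to each $i=1,\ldots,N$, this yields $\mathbf{u}^1=\mathbf{u}^2$ and completes the proof.
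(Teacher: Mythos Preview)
Your overall strategy coincides with the paper's: first use Lemma~\ref{lem:u_solution=>w_solution} and scalar GPME uniqueness to force $w^1=w^2$, hence a common coefficient $\varrho$, and then run an Oleinik-type duality argument on each linear equation $\int v_i(\partial_t\varphi+\varrho\Delta\varphi)=0$. The paper does exactly this.

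There is, however, a genuine gap in your execution of the duality step. At the very-weak level nothing more than $w\in L^{\infty}(Q_T)$ is known, so $\varrho=\Phi(w)/w$ is merely bounded measurable. Your restriction $\operatorname{supp}g\subset\{\varrho\geq\delta\}$ therefore confines $g$ to the \emph{interior} of the superlevel sets; since these need not be open, the class of admissible $g$ may be trivial or far from dense, and the implication ``$\int v_i g=0$ for all such $g$ $\Rightarrow$ $v_i=0$ a.e.\ on $\{\varrho>0\}$'' fails in general. (Relatedly, a smooth $\varrho_n$ cannot satisfy $\varrho_n\equiv\varrho$ on $\operatorname{supp}g$ unless $\varrho$ is already smooth there; what you actually need is only $\varrho_n\geq\delta$ on $\operatorname{supp}g$, but the support issue above remains.) Combining with $v_i=0$ on $\{w=0\}$ does not close the gap, because $\{\varrho>0\}\setminus\bigcup_{\delta>0}\operatorname{int}\{\varrho\geq\delta\}$ can have positive measure.

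The paper circumvents this by taking an \emph{arbitrary} $\theta\in C_c^\infty(Q_T)$ and proving the sharper adjoint estimate
\[
\int_{Q_T}\varrho_{\varepsilon,k}|\Delta\varphi|^2\,dx\,dt\;\leq\;C\,\|\nabla\theta\|_{L^2(Q_T)}^2,
\]
with $C$ independent of the truncation/mollification parameters (obtained by testing the dual equation with $\Delta\varphi$ and writing $\int\theta\,\Delta\varphi=-\int\nabla\theta\cdot\nabla\varphi$, then controlling $\sup_t\|\nabla\varphi(t)\|_{L^2}$). This uniformity is what allows $\theta$ to be unrestricted and yields $\int v_i\theta=0$ for all $\theta\in C_c^\infty(Q_T)$ directly. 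Replacing your $\delta^{-1}\|g\|_{L^2}^2$ bound by this estimate fixes the argument.
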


\begin{proof}
Let $\mathbf{u}^1$ and $\mathbf{u}^2$ be two solutions to problem~\eqref{eq:PB_u}, corresponding to the same initial and boundary data. It follows from the previous lemma  that $w^1=|\mathbf{u}^1|_1$ and $w^2=|\mathbf{u}^2|_1$ are both bounded very weak solutions to the same Cauchy-Dirichlet problem \eqref{eq:PB_w}.  A standard comparison result for such solutions \cite[Theorem 6.5]{Va07} provides uniqueness within this class. Thus $w^1=w^2=w$ and, in particular, the pressures coincide,  $\varrho ^1=\varrho ^2=\varrho =\frac{\Phi(w)}{w}$.  
Next, we use a duality proof, as in proving  the comparison results for  GMPE, to show that $\mathbf{u}^1=\mathbf{u}^2$. In fact, the situation here  is  simpler because we already know that $\varrho ^1=\varrho ^2$. For the sake of completeness, we   nonetheless give the details.
\par
Fixing any $i\in 1\ldots N$, denoting $\tilde{u}=u^1_i-u^2_i$, and subtracting the weak formulation \eqref{eq:weak_form_u} satisfied by $u^2$ from that satisfied by $u^1$, we see that
\begin{equation}
\int\limits_{Q_T} \{\tilde{u} \partial_t \ff  + \varrho  \tilde{u} \Delta \ff\}\,\,\mathrm{d}x\mathrm{d}t   =0
\label{eq:weak_u1-u2}
\end{equation}
for all $\ff \in \mathcal{C}^{2,1}(\overline{Q_T})$ vanishing on $\Sigma_T\cup\{t=T\}$. Fix some arbitrary $\theta\in \mathcal{C}^{\infty}_c(Q_T)$, choose $\eps>0$, and let $\varrho _{\eps}=\max\{\varrho , \eps\}$. Since $\mathbf{u}^1$ and $\mathbf{u}^2$ are bounded so is $\varrho =\varrho ^1=\varrho ^2$, and we can construct a smooth approximation $\{\varrho _{\eps,k}\}_{k\in \mathbb{N}}$ to $\varrho _{\eps}$ such that $\eps\leq \varrho _{\eps,k}\leq C$. For fixed $\eps,k$ we can then solve the approximate dual backward equation
\begin{equation}
\left\{
\begin{array}{ll}
\partial_t \ff  +\varrho _{\eps,k} \Delta \ff  = \theta \qquad & \text{in }Q_T\\
 \ff  =0	&	\text{in }\Sigma_T\\
 \ff (.,T) =0		&	\text{in }\Omega.
\end{array}
\right.
\label{eq:dual}
\end{equation}
for a unique $\ff =\ff_{\eps,k}\in \mathcal{C}^{2,1}(\overline{Q_T})\cap \mathcal{C^{\infty}}(Q_T)$. Since $\ff $ vanishes by construction on  $\Sigma_T$ and in $\Omega\times\{t=T\}$ it is admissible as a test function in \eqref{eq:weak_u1-u2}. This gives
\begin{align*}
\left|\int_{Q_T}\tilde{u}\theta \,\mathrm{d}x\mathrm{d}t \right|	&=	\left|\int_{Q_T} \tilde{u} (\varrho -\varrho _{\eps,k}) \Delta\ff \,\mathrm{d}x\mathrm{d}t\right|\\
&\leq  \left(\, \int \limits_{Q_T} \tilde{u}^2\frac{|\varrho  - \varrho _{\eps,k}|^2}{\varrho _{\eps,k}} \,\mathrm{d}x\mathrm{d}t \right)^{1/2}
\left(\, \int\limits_{Q_T}\varrho _{\eps,k} |\Delta \ff |^2 \,\mathrm{d}x\mathrm{d}t \right)^{1/2}\\
  & \leq \frac{C}{\eps^{1/2}}\|\varrho -\varrho _{\eps,k}\|_{L^2(Q_T)} \left(\, \int\limits_{Q_T}\varrho _{\eps,k} |\Delta \ff |^2 \,\mathrm{d}x\mathrm{d}t \right)^{1/2},
\end{align*}
because $\tilde{u}\in L^{\infty}$ and $\varrho _{\eps,k}\geq \eps$. Since $\ff $ is smooth,  a straightforward computation shows that (cf. \cite[Theorem 6.5]{Va07})
$$
\left(\int\limits_{Q_T}\varrho _{\eps,k} |\Delta \ff |^2\,\mathrm{d}x\mathrm{d}t\right)^{1/2}\leq C \|\nabla \theta\|_{L^2(Q_T)}
$$
for some $C>0$ independent of $\eps,k,\theta$. For fixed $\eps>0$ we can then choose $k$ large enough such that $|\varrho _{\eps}-\varrho _{\eps,k}|_{L^2(Q_T)}\leq \eps$. By definition of the cutoff function $\varrho _{\eps}$, we have $0\leq \ro_{\eps}-\ro\leq\eps$. Hence $|\varrho  -\varrho _{\eps}|_{L^2(Q_T)}\leq \eps|Q_T|^{1/2}$ so that $|\varrho -\varrho _{\eps,k}|_{L^2(Q_T)}\leq |\varrho -\varrho _{\eps}|_{L^2(Q_T)}+|\varrho _{\eps}-\varrho _{\eps,k}|_{L^2(Q_T)}\leq C\eps$, and we obtain
$$
\left|\int_{Q_T}\tilde{u}\theta\,\mathrm{d}x\mathrm{d}t\right|\leq C\eps^{1/2}\|\nabla \theta\|_{L^2(Q_T)}.
$$
Because $\theta\in \mathcal{C}^{\infty}_c(Q_T)$ was arbitrary and $\eps$ was independent of $\theta$ we conclude letting $\eps\to 0$ that $\tilde{u}=u^1_i-u^2_i=0$ a.e. in $ Q_T$ and the proof is complete.
\end{proof}
\begin{remark}
The above uniqueness  proof does not really require $L^{\infty}(Q_T)$ bounds but merely that $\mathbf{u},\varrho  \mathbf{u}\in L^{2}_{loc}(Q_T)$. In fact, scalar parabolic equations such as \eqref{eq:PB_w} benefit usually  from  smoothing properties that should allow one to extend the theory  to $L^{1}$ data. Due to the coupled vectorial nature of the problem and the lack of space we shall not pursue  this direction here.
\end{remark}
%

%
Theorem~\ref{theo:exist_sols_Dirichlet_Cauchy_PB} allows for the initial data $\mathbf{u}^0$ to vanish identically in some ball $B_r(x_0)\subset \Omega$. As we  will see in  Section~\ref{section:FB}, this leads to free boundaries in the degenerate case $\Phi'(0)=0$. It will  also become clear in the proof that the structural condition \eqref{eq:convexity_hyp} needs to be enforced only to get the estimate \eqref{eq:energy_very_weak_ui}. In fact, this energy estimate plays no role whatsoever in the analysis so one may actually dispense with it and limit oneself to very weak solutions of \eqref{eq:PB_u}.
\begin{proof}[Proof of Theorem~\ref{theo:exist_sols_Dirichlet_Cauchy_PB}]
Uniqueness follows from  Proposition~\ref{prop:uniqueness_weak_sols}. The existence argument is based on a ``lifting'' technique, classical for scalar GPME and working here thanks to the diagonal structure of the system.
\par
We first lift and approximate  the  bounded non-negative data $\mathbf{u}^0,\mathbf{f},\mathbf{z}^D$ component-wise by smooth functions   $u_i^{0,k},f_i^k$ and $z_i^{D,k}$ such that $\frac{1}{k}\leq u_i^{0,k},f_i^k,z_i^{D,k}\leq C+\frac{1}{k}$
for some constant $C>0$ depending only on the data, and
$$
\|\mathbf{u}^{0,k}-\mathbf{u}^0\|_{L^{1}(\Omega)} + \|\mathbf{f}^k-\mathbf{f}\|_{L^1(Q_T)}	+	\|\mathbf{z}^{D,k}-\mathbf{z}^{D}\|_{L^1(\Sigma_T)} \to 0
$$
 as $k\to\infty$. By Proposition~\ref{prop:exists_classical_solutions}, given the smooth data $\mathbf{u}^{0,k}, \mathbf{f}^k, \mathbf{z}^{D,k}$ there exists a  positive classical solution $\mathbf{u}^k$ to  \eqref{eq:PB_u} which is bounded in $Q_T$ uniformly in $k$. By virtue of Proposition~\ref{prop:C_alpha_estimate} $\{\mathbf{u}^k\}_{k}$ is also bounded in $\mathcal{C}^{\alpha,\alpha/2}(Q')$ for any subdomain $Q'$ and for some  $\alpha=\alpha(a,n)\in(0,1)$. By diagonal extraction we may then  assume that $\mathbf{u}^k\to \mathbf{u}$ in $\mathcal{C}_{\loc}(Q_T)$ with the limit function $\mathbf{u}$ satisfying the local $\mathcal{C}^{\alpha,\alpha/2}$-estimate \eqref{eq:uniform_Holder}.  In particular $\mathbf{u}^k(x,t)\to \mathbf{u}(x,t)\geq 0$ pointwise in $Q_T$. From the continuity of $\Phi$ with $\lim\limits_{s\to 0}\frac{\Phi(s)}{s}=\Phi'(0)$ it then follows that $\ro^k=\frac{\Phi(|\mathbf{u}^k|_1)}{|\mathbf{u}^k|_1)}\to \frac{\Phi(|\mathbf{u}|_1)}{|\mathbf{u}|_1)}=\ro$ a.e. in $Q_T$. 
Since $\mathbf{u}^k,\ro^k$ are bounded uniformly in $L^{\infty}(Q_T)$ 
we conclude by dominated convergence that $u^k_i\to u_i$ and $\ro^k u^k_i\to \ro u_i$ in $L^p(Q_T)$ for all $p\in [1,\infty)$. Given that $\mathbf{u}^k$ is a smooth positive solution and that for all $i=1,\ldots, N$ it holds
$$
 \int\limits_{Q_T}\{ u^k_i \partial_t \ff + \varrho^k  u^k_i \Delta \ff + f^k_i \ff \}\,\mathrm{d}x\mathrm{d}t  
+ \int\limits_{\Omega} u_i^{0,k}(x) \ff (x,0)\mathrm{d}x = \int\limits_{\Sigma_T} z^{D,k}_i\frac{\partial \ff }{\partial \nu}\mathrm{d}x\mathrm{d}t.
$$
The previous strong $L^p(Q_T)$ convergence and convergence of the data allow one to send $k\to\infty$ to obtain \eqref{eq:weak_form_u}. Similarly by Lemma~\ref{lem:u_solution=>w_solution} we see that $w=\lim w^k=\lim |\mathbf{u}^k|_1=|\mathbf{u}|_1$ is a very weak solution to \eqref{eq:PB_w}.

Regarding the energy estimates, if the data satisfy \eqref{hyp:boundary_data} and \eqref{hyp:initial+forcing}, then they can be approximated as before by smooth positive data satisfying in addition
\begin{eqnarray} \label{eq:} 
\notag
&& \|\mathbf{u}^{0,k}\|_{L^{\infty}(Q_T)} + \|\mathbf{f}^k\|_{L^{\infty}(Q_T)} 
\\ \notag
&& \qquad + \|\mathbf{z}^{D,k}\|_{L^{\infty}(Q_T)} + \|\mathbf{z}^{D,k}\|_{L^2(0,T;H^1(\Omega))} + \|\partial_t \mathbf{z}^{D,k}\|_{L^{\infty}(Q_T)}\leq C.
\end{eqnarray}
By Proposition~\ref{prop:energy_estimate} we get
$$
 \|\nabla (\ro^k w^k)\|_{L^2(Q_T)}\leq C
$$
and
$$
 \|\nabla (\ro^k u_i^k)\|_{L^2(Q'_T)}\leq C(1+1/d') \qquad \forall i=1\ldots N
$$
uniformly in $k$ for some $C=C(a,n,N,T,M)$ only. Since $\ro^k u_i^k,\ro^k w^k\to \ro u_i,\ro w$ we conclude that $\nabla(\ro u_i),\nabla(\ro w)$ satisfy the same $L^2$ bounds and the proof is complete.
\end{proof}

%
\section{Free boundaries}
\label{section:FB}
In this section we set $Q=\R^n\times (0,\infty)$, $Q^{\tau,T}=\R^n\times(\tau,T)$, $Q^T=\R^n\times(0,T)$, and consider the Cauchy Problem
\begin{equation}
\left\{
\begin{array}{ll}
 \partial_t \mathbf{u}=\Delta\left(\frac{\Phi(|\mathbf{u}|_1)}{|\mathbf{u}|_1}\mathbf{u}\right)	 & \mbox{in } \, \R^n\times(0,\infty)\\
 \mathbf{u}(x,0) =  \mathbf{u}^0 (x)	& \mbox{in }\R^n
\end{array}
\right.
 \label{eq:Cauchy_PB_u}
\end{equation}
with a non-negative, bounded and compactly supported initial data $\mathbf{u}^0 $. If the modulus of ellipticity $\ro=\frac{\Phi(|\mathbf{u}|_1)}{|\mathbf{u}|_1}$ in \eqref{eq:Cauchy_PB_u} vanishes  when $|\mathbf{u}|_1=0$ (the degenerate case), compactly supported solutions  should evolve  from compactly supported initial data. By analogy with scalar equations, the free boundary $\Gamma(t):=\partial \operatorname{supp}\mathbf{u}(\,.\,,t)$ should then  propagate with finite speed in the sense that, at any $x_0\notin \operatorname{supp}\,\mathbf{u}(\,.\,,t_0)$ we should have $x_0\notin \operatorname{supp}\,\mathbf{u}(\,.\,,t_0+h)$ for small enough $h>0$. Although this behaviour is  well understood for scalar equations, the coupled nature of  system \eqref{eq:Cauchy_PB_u} prevents us from just recalling known results. Instead, we will  again resort  to our central idea,  based on the particular structure of the system, that 
controlling $w=|\mathbf{u}|_1$ controls each individual species $u_i$. Indeed, $0\leq u_i\leq |\mathbf{u}|_1=w$, thus $\operatorname{supp}\,u_i\subset \operatorname{supp}\,w$ and $\mathbf{u}$ will  propagate with finite speed as long as $w$ does. This will in turn be ensured by looking at the scalar problem 
\begin{equation}
\left\{
\begin{array}{ll}
 \partial_t w=\Delta\Phi(w)	 & \mbox{in } \, \R^n\times(0,\infty)\\
 w(x,0) =  w^0 (x)	& \mbox{in }\R^n
\end{array}
\right.
 \label{eq:Cauchy_PB_w}
\end{equation}
with a non-negative, bounded and compactly supported initial data $w^0=|\mathbf{u}^0|_1$. 

Given that assumption \eqref{eq:structural_diffusion_hyp} does not rule out  nondegenerate diffusion (we may have $\Phi'(0)>0$) for which the finite speed of propagation obviously fails, we need to impose an extra degeneracy condition. For the scalar Cauchy problem \eqref{eq:Cauchy_PB_w} this is normally done through replacing the structural condition \eqref{eq:structural_diffusion_hyp} by the \emph{slow diffusion} hypothesis
\begin{equation}
s>0:\qquad 1+a \leq \frac{s\Phi'(s)}{\Phi(s)} \leq \frac{1}{a}.
\label{eq:slow_diffusion_Sa}
\tag{$S_a$} 
\end{equation}
One readily sees that condition \eqref{eq:slow_diffusion_Sa} implies $\Phi'(0)=0$ and the algebraic behaviour
$ 0 \leq \Phi(1)s^{\frac{1}{a}} \leq \Phi(s) \leq \Phi(1)s^{1+a}$ for small $s$, which usually provides information on the speed of propagation in terms of $a$. However,  since our analysis should include  the Freundlich isotherm $\mathbf{b}_f(\mathbf{z})=(\phi+(1-\phi)|\mathbf{z}|_1^{p-1})\mathbf{z}$ for which the corresponding $\Phi_f(s)$ behaves linearly at infinity,  we cannot assume \eqref{eq:slow_diffusion_Sa} globally in $s>0$ as the lower bound $1<cst\leq \frac{s\Phi'(s)}{\Phi(s)}$ is not admissible for large $s$. In fact, given that the degeneracy is essentially a local feature at the level sets $\{s\approx 0\}$ we could require  condition \eqref{eq:slow_diffusion_Sa}  to hold only  for $0<s\leq s_0$  which is certainly true for our Freundlich isotherm with $1+a=1/p>1$. However, we   prefer to avoid this technical path and, instead,  impose the less restrictive degeneracy condition
\begin{equation}
\forall s>0:\qquad \int_0^s\Phi(s')ds'\geq c\,  \Phi(s)^{\frac{m+1}{m}} \qquad\mbox{for some }m>1.
\tag{$S_m$}
\label{eq:slow_diffusion_Phi_m}
\end{equation}
This implies that  $\Phi'(0)=0$ and is valid for the pure PME nonlinearity $\Phi(s)=s^m$ with $c=m+1$ if $m>1$. 

For technical reasons it will be convenient to reformulate 
\eqref{eq:slow_diffusion_Phi_m} in terms of the original concentration $\mathbf{z}=\mathbf{b}^{-1}(\mathbf{u})$. It is easy to see that the change of variables $r=\Phi(s)=\beta^{-1}(s)$ turns   \eqref{eq:slow_diffusion_Phi_m} into the equivalent condition
\begin{equation}
\forall r>0:\qquad f(r):=r\beta(r)-\int_0^r\beta(r')dr'\geq cr^{\frac{m+1}{m}}\qquad\mbox{for some }m>1,
\tag{$S'_m$}
\label{eq:slow_diffusion_beta_m}
\end{equation}
from which it follows that $\beta'(0)=\infty$, as expected since $\beta=\Phi^{-1}$. An explicit computation shows that \eqref{eq:slow_diffusion_beta_m} holds true globally in $r>0$ for the Freundlich isotherm $\beta_f(r)=\phi r+(1-\phi)r^p$ with $c=(1-\phi)^{-1/p}$ and $m=\frac{1}{p}>1$, showing that \eqref{eq:slow_diffusion_Phi_m}, equivalently \eqref{eq:slow_diffusion_beta_m}, is indeed weaker than \eqref{eq:slow_diffusion_Sa}. 
\par
In the case of pure PME nonlinearity $\Phi(s)=s^m$ the Cauchy problem \eqref{eq:Cauchy_PB_w} has been widely studied and the qualitative and quantitative theory of free boundaries is now well understood, see e.g. \cite{CVW87,CW90} and references therein. Partial results \cite{dPV91,DR03} also hold for general nonlinearities $\Phi(s)$ but to the best of our knowledge one always assumes the degeneracy condition in the form \eqref{eq:slow_diffusion_Sa}, which fails for the Freundlich isotherm. 

We will start our analysis with a standard statement and prove it assuming only the weaker condition \eqref{eq:slow_diffusion_Phi_m}.
\begin{prop}
Assume that conditions \eqref{eq:monotonicity_b}-\eqref{eq:structural_diffusion_hyp} and \eqref{eq:slow_diffusion_Phi_m} hold for some $a\in (0,1)$ and $m>1$, and let the initial datum $0\leq w^0(x)\leq M$ be compactly supported in $ B_{R_0}$ for some $R_0>0$. Then the Cauchy problem  \eqref{eq:Cauchy_PB_w} admits a unique weak energy solution $0\leq w(x,t)\leq M$ and $\|\nabla \Phi(w)\|_{L^2(Q)}\leq C(a,R_0,M,n)$. Moreover,  $w(\,.\,,t)$ is compactly supported for all $t>0$, the free boundary $\Gamma(t)=\partial \operatorname{supp}w(\,.\,,t)$ propagates with finite speed, and
\begin{equation}
\forall t\geq 0:\qquad \operatorname{supp}\, w(\,.\,,t)\subseteq B_{R(t)}\quad \mbox{with }R(t):=R_0+C_1t^\lambda \, ,
\label{eq:estimate_FB_propagation}
\end{equation}
 with some constants $C_1(m,a,M,R_0)>0$ and $\lambda=\lambda(m,n)>0$.
\label{prop:FB_Cauchy_PB_w_Rn}
\end{prop}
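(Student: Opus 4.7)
The strategy is to reduce to the bounded-domain Dirichlet theory of Section~\ref{section:weak_sols} for existence, uniqueness and the energy estimate, and then to handle the finite-speed-of-propagation bound \eqref{eq:estimate_FB_propagation} by a scalar local energy argument adapted to the degeneracy condition~\eqref{eq:slow_diffusion_Phi_m}. For existence I would approximate by solving the Dirichlet problem on balls $\Omega=B_R\supset\overline{B_{R_0}}$ with zero boundary data and initial datum $w^{0,R}=w^0\chi_{B_R}$; each such problem admits a unique bounded weak energy solution $w^R$ with $0\leq w^R\leq M$ by (the scalar case of) Theorem~\ref{theo:exist_sols_Dirichlet_Cauchy_PB}. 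Testing with $\Phi(w^R)$ and using that $w^0$ is supported in $B_{R_0}$ gives the $R$-independent global energy bound
\begin{equation*}
\|\nabla\Phi(w^R)\|_{L^2(B_R\times(0,\infty))}^2\leq 2\int_{B_{R_0}}F(w^0)\,dx\leq C(a,R_0,M,n),
\end{equation*}
where $F(s):=\int_0^s\Phi(\sigma)\,d\sigma$. The local H\"older estimate \eqref{eq:uniform_Holder} then provides compactness in $C_\loc(Q)$, and passing to the limit $R\to\infty$ yields a weak energy solution of \eqref{eq:Cauchy_PB_w} on all of $Q$. Uniqueness in this class follows from the standard scalar duality/comparison argument for the GPME \cite[Ch.~6]{Va07}, entirely analogous to Proposition~\ref{prop:uniqueness_weak_sols}.

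For the finite speed of propagation I would not use barrier comparison, since a Barenblatt-type profile is tied to the pure PME nonlinearity $\Phi(s)=s^m$, and instead follow the local energy method of Antontsev--D\'\i az--Shmarev. Introduce
\begin{equation*}
E(\rho,t):=\esssup_{0<s<t}\int_{|x|>\rho}F(w(x,s))\,dx+\int_0^t\!\!\int_{|x|>\rho}|\nabla\Phi(w)|^2\,dx\,ds,
\end{equation*}
which vanishes at $t=0$ for every $\rho\geq R_0$. Testing $\partial_t w=\Delta\Phi(w)$ against $\Phi(w)\eta^2$ with $\eta$ a smooth radial cutoff supported in $\{|x|>\rho\}$ and equal to one on $\{|x|>\rho'\}$, $\rho'>\rho$, integrating by parts in space and time and using Young's inequality leads to the localized energy inequality
\begin{equation*}
E(\rho',t)\leq \frac{C}{(\rho'-\rho)^2}\int_0^t\!\!\int_{\rho<|x|<\rho'}\Phi(w)^2\,dx\,ds.
\end{equation*}
The key point is that \eqref{eq:slow_diffusion_Phi_m} is equivalent to the pointwise bound $\Phi(w)\leq C\,F(w)^{m/(m+1)}$, which together with Gagliardo--Nirenberg interpolation between the $L^1$ integrability of $F(w)$ and the $L^2(H^1)$ control of $\nabla\Phi(w)$ packaged inside $E(\rho,t)$ closes a nonlinear recursion
\begin{equation*}
E(\rho',t)\leq \frac{C\,t^{\gamma}}{(\rho'-\rho)^{\beta}}\,E(\rho,t)^{\theta},\qquad \theta>1,
\end{equation*}
with explicit $\beta,\gamma,\theta$ depending only on $m$ and $n$. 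A standard one-parameter geometric iteration on $\rho$ then forces $E(\rho,t)\equiv 0$ for every $\rho\geq R_0+C_1 t^{\lambda}$ with $\lambda=\lambda(m,n)$ determined from $\beta,\gamma,\theta$, yielding \eqref{eq:estimate_FB_propagation}. Finite speed of propagation for the full system then transfers to each component through the pointwise bound $0\leq u_i\leq|\mathbf{u}|_1=w$.

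The main obstacle will be to push the Gagliardo--Nirenberg interpolation through with a self-improving exponent $\theta>1$ under the sole assumption \eqref{eq:slow_diffusion_Phi_m}, since the classical incarnations of this method \cite{dPV91,DR03} invoke the stronger algebraic bound \eqref{eq:slow_diffusion_Sa} from the outset. The saving observation is that \eqref{eq:slow_diffusion_Phi_m} already encodes the needed degeneracy at $s=0$---which is all that matters near the free boundary---while the fact that $\Phi$ may grow only linearly at infinity (as does the Freundlich $\Phi_f$) becomes harmless once the uniform $L^\infty$ bound $0\leq w\leq M$ is used to absorb contributions from the set $\{w\gtrsim 1\}$ into constants depending on $M$.
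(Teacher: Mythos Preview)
Your proposal is correct and follows the same underlying philosophy as the paper---local energy methods in the spirit of Antontsev for the propagation estimate---but the execution differs. For existence and uniqueness the paper simply cites \cite{Va07} rather than reconstructing the solution from a sequence of Dirichlet problems on balls; your approximation argument is perfectly valid and yields the same global energy bound, just with more work. For the finite-speed-of-propagation statement the paper does \emph{not} redo the local energy iteration you sketch. Instead it performs the change of variables $z=\Phi(w)$, so that $\partial_t\beta(z)=\Delta z$, verifies that the energy $E(z;t)=\sup_{\tau\leq t}\int f(z(\cdot,\tau))+\int_0^t\int|\nabla z|^2$ is bounded and that $j(z)\in\mathcal C([0,T];L^1(\R^n))$, and then invokes \cite[Theorem~3.1 and Corollary~3.1]{DV85} as a black box. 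The point is that condition \eqref{eq:slow_diffusion_Phi_m} is \emph{precisely} the structural hypothesis in \cite{DV85} (rewritten as \eqref{eq:slow_diffusion_beta_m} after the change of variables), so the result applies directly and nothing needs to be re-proved. Your route has the merit of being self-contained, but the obstacle you identify---closing the Gagliardo--Nirenberg loop with $\theta>1$ under \eqref{eq:slow_diffusion_Phi_m} alone---is exactly what \cite{DV85} already handles, so in effect you would be reproducing that paper's argument. One minor remark: the last sentence of your proposal about transferring finite speed to the components $u_i$ belongs to Theorem~\ref{theo:free_boundaries}, not to this scalar proposition.
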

\begin{proof}
Existence and uniqueness are proven in \cite{Va07}. Since $0\leq w^0\leq M$, the comparison principle gives  $0\leq w\leq M$ in $Q$. The $L^2(Q)$-bound for $\nabla\Phi(u)$ easily follows from letting $t\to\infty$ in the classical energy identity
\begin{equation}
\int_{B_R}\Psi(w(t,x))\,\mathrm{d}x+\int\limits_0^t\int\limits_{B_R}|\nabla \Phi(w(x,\tau))|^2\,\mathrm{d}x\,\mathrm{d}\tau=\int_{B_R}\Psi(w^0(x))\,\mathrm{d}x\,
\label{eq:energy_identity_w}
\end{equation}
where $\Psi(s):=\int_0^s\Phi(s')\,\mathrm{d}s'$ (see \cite{Va07} for details). Indeed with our assumptions $w^0$ is bounded and compactly supported hence $\|\Psi(w^0)\|_{L^1(\R^n)}\leq \Psi(M)\operatorname{meas}(B_{R_0})= C(a,M,R_0,n)$.\par
As for the speed of propagation of the support, we go back to the original concentration formulation and  recall from \cite{DV85}  two results  based on the energy methods introduced in \cite{A81}.  To verify that the assumptions in \cite{DV85} are satisfied,
we set $z:= \Phi(w)=\beta^{-1}(w)$ and observe that $\partial_t\beta(z)=\Delta z$ in $Q$. Making the change of variables $r=\Phi(s)$ in  \eqref{eq:slow_diffusion_beta_m} shows that $f(z(x,t))=\Psi(w(x,t))$, and the energy identity \eqref{eq:energy_identity_w} readily gives
$$
E(z;t):=\sup\limits_{0\leq \tau\leq t} \int_{\R^n}f(z(x,\tau))\,\mathrm{d}x + \int\limits_0^t\int\limits_{\R^n}|\nabla z(x,\tau)|^2\,\mathrm{d}x\,\mathrm{d}\tau \leq C(a,M,R_0,n).
$$
Defining $j(r):=\int_0^r\beta(r')\,\mathrm{d}r'$ and making the change $r=\Phi(s)$, we  obtain for all $0\leq w_1,w_2\leq M$ the bound
\begin{align*}
\left|j(\Phi(w_1))-j(\Phi(w_2))\right|	 =\left|\int_{\Phi(w_2)}^{\Phi(w_1)} \beta(r)\,\mathrm{d}r\right|	&=\left|\int_{w_1}^{w_2}s\Phi'(s)\,\mathrm{d}s\right|\\
  & \leq \left|\int_{w_1}^{w_2}\frac{1}{a}\Phi(s)\,\mathrm{d}s\right| \leq \frac{\Phi(M)}{a}|w_1-w_2|.
\end{align*}
Since $0\leq w(x,t)\leq M$ and $z=\Phi(w)\leq \Phi(M)\leq C(a,M)$ we have in particular
$$
\forall t_1,t_2\in [0,T]:\qquad \|j(z(t_1))-j(z(t_2))\|_{L^1(\R^n)}
  \leq  C\|w(t_1)-w(t_2)\|_{L^1(\R^n)}.
$$
Given our assumptions on $w^0$, we can infer from the classical theory \cite{Va07} for the scalar Cauchy problem \eqref{eq:Cauchy_PB_w}  that  $w\in \mathcal{C}([0,T];L^1(\R^n))$. This  in turn  yields $j(z)\in \mathcal{C}(0,T;L^1(\R^n))$.\par

The energy estimate $E(z;t)\leq C$ and the continuity of $j(z)$ allow us to apply \cite[Corollary 3.1]{DV85} and we conclude that $z(\,.\,,t)$ is compactly supported, satisfies \eqref{eq:estimate_FB_propagation}. Similarly, applying \cite[Theorem 3.1]{DV85} shows that the free boundary $\Gamma(t)=\partial \operatorname{supp}w(\,.\,,t)$ propagates with finite speed and the proof is complete.
\end{proof}
We can now establish the corresponding result on the multicomponent Cauchy problem \eqref{eq:Cauchy_PB_u}.
\begin{theorem}[Free Boundary solutions]
Let conditions \eqref{eq:monotonicity_b}-\eqref{eq:structural_diffusion_hyp}-\eqref{eq:convexity_hyp} and \eqref{eq:slow_diffusion_beta_m} hold for some $a\in(0,1)$ and $m>1$. Assume  that $\mathbf{u}^0\in L^\infty(\mathbb{R}^n)$ is componentwise non-negative with $w^0=|\mathbf{u}^0|_1\leq M$, and such that $\operatorname{supp}w^0\subseteq B_{R_0}$ for some $R_0>0$. Then there exists a unique non-negative very weak solution $\mathbf{u}\in L^{\infty}(Q)$ to \eqref{eq:Cauchy_PB_u}. Moreover,
\begin{enumerate}
 \item[(i)]
 $w=|\mathbf{u}|_1$ is the unique weak energy solution to \eqref{eq:Cauchy_PB_w}, $0\leq w\leq M$, and
 $$
 \|\nabla\Phi(w)\|_{L^2(Q)}\leq C(a,M,R_0)
 $$
 \item[(ii)]
 $\mathbf{u}$ is a local energy solution to \eqref{eq:Cauchy_PB_u} in the sense that for all $T>0$ we have
 $$
 \|\nabla(\ro u_i)\|_{L^2(Q^T)}\leq C(a,M,R_0,T)\qquad \forall i=1,\ldots N \, , 
 $$
 where $\ro=\frac{\Phi(w)}{w}$.
 \item[(iii)]
 $\operatorname{supp}\,w(\,.\,,t)$ propagates with finite speed, and
 $$
 \forall\, t\geq 0,\,i=1\ldots N: \qquad \operatorname{supp}u_i(\,.\,,t)\subseteq \operatorname{supp} w(\,.\,,t)\subseteq B_{R(t)} 
 $$
 with $R(t)=R_0 + C_1t^\lambda$ for some $C_1(m,a,M,R_0)>0$ and $\lambda=\lambda(m,n)>0$ only.
 \item[(iv)]
 There is $\alpha=\alpha(a,n)\in(0,1)$ such that $\mathbf{u}$ is $(\alpha,\alpha/2)$-H\"older continuous in any strip $Q^{\tau,T}=\R^n\times(\tau,T)$, $0<\tau<T$, and
$$
\|\mathbf{u}\|_{C^{\alpha,\alpha/2}(Q^{\tau,T})}\leq C(1+1/\sqrt{\tau}).
$$
for some $C(a,T,n,N,M)>0$ only.
 \item[(v)]
 If $\Phi$ is smooth in $\R^+$ then $u_i$ is smooth in $\{u_i>0\}\cap\{t>0\}$.
\end{enumerate}
\label{theo:free_boundaries}
\end{theorem}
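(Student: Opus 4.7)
The plan is to reduce the vectorial Cauchy problem to the Cauchy--Dirichlet theory of Theorem~\ref{theo:exist_sols_Dirichlet_Cauchy_PB} combined with the scalar free-boundary result of Proposition~\ref{prop:FB_Cauchy_PB_w_Rn}, exploiting finite speed of propagation to localize. First I would apply Proposition~\ref{prop:FB_Cauchy_PB_w_Rn} with initial datum $w^0 := |\mathbf{u}^0|_1$ to produce the unique weak energy solution $w$ of \eqref{eq:Cauchy_PB_w}. This already yields (i) and the scalar finite-speed statement in (iii). Since $u_i \leq |\mathbf{u}|_1 = w$ for non-negative data, once $\mathbf{u}$ is constructed the support inclusion $\operatorname{supp} u_i(\cdot,t)\subseteq \operatorname{supp} w(\cdot,t)\subseteq B_{R(t)}$ is automatic and completes (iii).

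To construct $\mathbf{u}$ itself, I would exhaust $\R^n$ by balls $B_k$ with $k > R(T)+1$ and apply Theorem~\ref{theo:exist_sols_Dirichlet_Cauchy_PB} on each $B_k\times(0,T)$ with initial data $\mathbf{u}^0|_{B_k}$, zero boundary data and zero forcing; hypotheses \eqref{hyp:initial+forcing}--\eqref{hyp:boundary_data} are trivially satisfied. The resulting unique bounded very weak solution $\mathbf{u}^k$ has scalar sum $w^k=|\mathbf{u}^k|_1$, and I would next identify $w^k$ with $w$ on $B_k$: since $\operatorname{supp} w(\cdot,t)\subset B_{R(T)}\subset\subset B_k$, the restriction $w|_{B_k\times(0,T)}$ is itself a bounded very weak solution to the scalar Cauchy--Dirichlet problem with identical data, and the uniqueness part of Theorem~\ref{theo:exist_sols_Dirichlet_Cauchy_PB} forces $w^k=w$. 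Consequently $\operatorname{supp}\mathbf{u}^k(\cdot,t)\subseteq \operatorname{supp} w^k(\cdot,t)\subseteq B_{R(t)}$ uniformly in $k$.

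The passage to the limit is then standard. Extending $\mathbf{u}^k$ by zero outside $B_k$, Proposition~\ref{prop:C_alpha_estimate} supplies a $C^{\alpha,\alpha/2}$-bound on $\R^n\times(\tau,T)$ uniform in $k$ (the distance from any fixed compact to $\partial B_k$ diverges with $k$), and Ascoli--Arzel\`a gives a locally uniform limit $\mathbf{u}^k\to\mathbf{u}$ inheriting (iv). Because $\rho(s)=\Phi(s)/s$ is continuous with $\rho(0)=\Phi'(0)=0$ by \eqref{eq:slow_diffusion_Phi_m}, the product $\rho^k u_i^k$ converges uniformly, so passing to the limit in the very weak formulation is immediate. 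For (ii) the local energy bound $\|\nabla(\rho^k u_i^k)\|_{L^2(\Omega'\times(0,T))}\leq C(1+1/d_k')$ from Theorem~\ref{theo:exist_sols_Dirichlet_Cauchy_PB} stays uniformly bounded as $d_k'=\operatorname{dist}(\overline{\Omega'},\partial B_k)\to\infty$, and weak $L^2$-lower semicontinuity transfers the bound to $\mathbf{u}$.

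Uniqueness follows almost verbatim from Proposition~\ref{prop:uniqueness_weak_sols}: two very weak solutions $\mathbf{u}^1,\mathbf{u}^2$ first yield $w^1=w^2=w$ via the scalar uniqueness in Proposition~\ref{prop:FB_Cauchy_PB_w_Rn}, hence $\rho^1=\rho^2=\rho$, and the dual backward problem posed in a fixed large ball containing all (uniformly bounded) supports closes the argument, with cutoffs handling the unboundedness of $\R^n$. Finally, for (v), any compact subset of $\{u_i>0\}\cap\{t>0\}$ has $w\geq u_i$ bounded away from zero, so $\rho$ is bounded above and below and the $i$-th equation $\partial_t u_i=\dive(\rho\nabla u_i)+\dive(u_i\nabla\rho)$ is uniformly parabolic with H\"older coefficients by (iv); standard Schauder bootstrap yields $C^\infty$ regularity when $\Phi$ is smooth. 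The main obstacle is the identification $w^k=w|_{B_k}$, since it is what transfers the scalar compactly supported structure back to the coupled system and makes the entire localization scheme work; everything else is essentially assembly of estimates already proved.
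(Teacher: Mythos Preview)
Your strategy coincides with the paper's: obtain the scalar solution $w$ from Proposition~\ref{prop:FB_Cauchy_PB_w_Rn}, localize the vector problem to a large ball via Theorem~\ref{theo:exist_sols_Dirichlet_Cauchy_PB}, and identify the resulting $|\mathbf{u}|_1$ with $w$ by scalar uniqueness. Two points deserve comment.

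First, the exhaustion by balls $B_k$ and the Ascoli--Arzel\`a limit are superfluous. Once you have shown $w^k=w$ on $B_k$ and hence $\operatorname{supp}\mathbf{u}^k(\cdot,t)\subseteq B_{R(T)}$, the zero extension of $\mathbf{u}^k$ is already a very weak solution of \eqref{eq:Cauchy_PB_u} on $\R^n\times(0,T)$; moreover for any $k,k'>R(T)+1$ the restriction of $\mathbf{u}^{k'}$ to $B_k\times(0,T)$ solves the same Cauchy--Dirichlet problem as $\mathbf{u}^k$, so uniqueness in Theorem~\ref{theo:exist_sols_Dirichlet_Cauchy_PB} forces $\mathbf{u}^k\equiv\mathbf{u}^{k'}$ and the sequence is eventually constant. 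The paper accordingly works with a \emph{single} ball $B_R$ with $R\geq 2R(T)$ and no limit; (ii) and (iv) then follow by viewing $\mathbf{u}$ as the Cauchy--Dirichlet solution in $B_{R+1}$ with $\Omega'=B_R$, so that $d'=1$ in \eqref{eq:energy_very_weak_ui} and \eqref{eq:uniform_Holder}.

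Second, your argument for (v) has a genuine gap. The equation $\partial_t u_i=\dive(\rho\nabla u_i)+\dive(u_i\nabla\rho)$ carries a drift involving $\nabla\rho$, and (iv) only yields $w\in C^{\alpha}$, hence $\rho\in C^{\alpha}$, with no control on $\nabla\rho$; Schauder theory for $u_i$ cannot be launched with these coefficients. The remedy, carried out in the paper, is to bootstrap $w$ first: on $\{u_i>0\}$ one has $w\geq u_i>0$, so $\partial_t w=\dive(\Phi'(w)\nabla w)$ is uniformly parabolic with H\"older coefficient $\Phi'(w)$, giving $w\in C^{1+\beta}$ and then $w\in C^{\infty}$ by iteration since the coefficient depends only on $w$ itself. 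Only after $w$ (and hence $\rho$) is smooth does the linear equation for $u_i$ have smooth coefficients and the bootstrap for $u_i$ go through.
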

\begin{remark} As in Theorem~\ref{theo:exist_sols_Dirichlet_Cauchy_PB}, the structural condition \eqref{eq:convexity_hyp}  is only needed to get (ii) and can be relaxed by restricting \eqref{eq:Cauchy_PB_u} to very weak solutions instead of  energy solutions. Moreover, as in Proposition~\ref{prop:boundary_regularity}, the H\"older regularity estimate (iv) can be extended up to $t=0^+$ if we  assume further $\mathcal{C}^{\beta}(\R^n)$ regularity from $\mathbf{u}^0$. Note in particular  that in (iii) we only claim that  $w$ has finite speed of propagation but not that the individual species propagate with finite speed. In fact the support of each species should in general be discontinuous in time, see the discussion at the end of this section.
\end{remark}
%
\begin{proof}
Arguing exactly as in the proof of Proposition~\ref{prop:uniqueness_weak_sols} but choosing now test function $ \theta\in \mathcal{C}^{\infty}_c(\R^n\times(0,\infty))$ it is easy to show uniqueness within the class of very weak solutions. It is therefore enough to prove existence in finite time intervals $[0,T]$ for any fixed $T>0$. 

Given that the free boundary should a priori propagate with finite speed, solutions to the Cauchy problem in the whole space should agree with solutions of the Cauchy-Dirichlet problem in $B_R\times(0, T)$ with zero boundary conditions, as long as $R>0$ is large enough so that the free boundary stays at a positive distance from $\partial B_R$ for all $t\leq T$. We should therefore be able to construct solutions to the Cauchy problem in $\R^n\times(0, T)$ by considering auxiliary Dirichlet problems  in large balls and  using Theorem~\ref{theo:exist_sols_Dirichlet_Cauchy_PB}.\\
From Proposition~\ref{prop:FB_Cauchy_PB_w_Rn} we can define the unique solution  $\overline{w}$ of the Cauchy problem \eqref{eq:Cauchy_PB_w} in $\R^n\times(0,\infty)$ with  initial data $w^0:=|\mathbf{u}^0|_1$. For any fixed $T>0$ choose $R>0$ large enough so that $R(T)\leq R/2$ and
$$
\forall t\leq T:\qquad \operatorname{supp}\,\overline{w}(\,.\,,t)\subseteq B_{R(T)}\subseteq B_{R/2},
$$
where $R(T)=R_0+C_1T^\lambda$ and the constants $C_1$ and $\lambda$ are as in Proposition~\ref{prop:FB_Cauchy_PB_w_Rn}. Next, let $\mathbf{u}=\mathbf{u}_R$ be the unique solution  to problem \eqref{eq:PB_u}  in $B_R\times(0,T)$ corresponding to the  initial data $\mathbf{u}^0$ and zero boundary values on $\partial B_R$ and  given by Theorem~\ref{theo:exist_sols_Dirichlet_Cauchy_PB}. It also follows from Theorem~\ref{theo:exist_sols_Dirichlet_Cauchy_PB}  that  $w:=|\mathbf{u}|_1$ is a weak solution to the corresponding Cauchy-Dirichlet problem in $B_R\times(0,T)$ with zero boundary values. Given the definition of $R$ we thus see that $\overline{w}$ remains at a distance $R/2$ away from $\partial B_R$. It is easy to see that the restriction $\overline{w}_{|B_R\times(0,T)}$ is also a weak solution to the same Cauchy-Dirichlet problem as $w$. By standard uniqueness theorem for weak solutions of \eqref{eq:PB_w} we conclude that $w=\overline{w}$ in $B_R\times(0,T)$. In particular
$$
 \forall\, t\leq T,\,i=1\ldots N: \qquad \operatorname{supp}\,u_i(\,.\,,t)\subseteq\operatorname{supp}\,w(\,.\,,t) = \operatorname{supp}\overline{w}(\,.\,,t)\subseteq B_{R(t)}
$$
where $R(t)=R_0+C_1t^\lambda$ and the distance between the support of $\mathbf{u}$ and $\partial B_R$ is at least $R/2>0$ for all $t\leq T$. Extending $\mathbf{u}$ and $w$  outside $B_{R_0}$ by zero for all $t\in(0,T)$ it is then a simple exercise to verify that these extensions satisfy the weak formulations of \eqref{eq:Cauchy_PB_u} and \eqref{eq:Cauchy_PB_w} in the whole space, whence existence of free-boundary solutions in $\R^n\times(0,T)$ for arbitrary $T>0$. The energy estimate (i) and propagation properties (iii) immediately follow from the definition of $\overline{w}$ and Proposition~\ref{prop:FB_Cauchy_PB_w_Rn}.\\
 For fixed $T>0$ take now $R>0$ large enough so that $\mathbf{u}$ stays supported in $B_R$ for all $t\leq T$. Viewing $\mathbf{u}$ as the unique solution to the Cauchy-Dirichlet problem in $B_{R+1}$ and taking $\Omega=B_{R+1}$, $\Omega'=B_R$ in Theorem~\ref{theo:exist_sols_Dirichlet_Cauchy_PB} we have $d'=1$ in \eqref{eq:energy_very_weak_ui}, thus
$$
\|\nabla(\ro u_i)\|_{L^2(\R^n\times(0,T))}=\|\nabla(\ro u_i)\|_{L^2(B_R\times(0,T))}\leq C(a,n,N,M,T)
$$
 as claimed in (ii).

Assertion (iv) is proven similarly by  considering the Cauchy-Dirichlet problem in $B_{R+1}$, choosing $\Omega^\prime=B_{R}\subset B_{R+1}=\Omega$ in Theorem~\ref{theo:exist_sols_Dirichlet_Cauchy_PB}  and taking $d'=1$ in estimate \eqref{eq:uniform_Holder}.

To prove (v) we use a  local bootstrap argument. If $w>0$ in some $B_r(x_0)\times(t_0-\tau,t_0+\tau)$, $t_0>0$, then in particular the pressure $p=\Phi'(w)>0$ there. Since $w$ is H\"older continuous and $\Phi$ is smooth also $p$ is  H\"older continuous. Moreover, $w$ solves a uniformly parabolic equation in divergence form: $\partial_t w=\Delta\Phi(w)=\dive(p\nabla w)$. Hence $w\in C^{1+\beta}$ for some $\beta$. 
By bootstrapping we immediately see that $w$ is locally smooth. Consider now any species $u_i$ and observe that if $u_i>0$ in $B_r(x_0)\times(t_0-\tau,t_0+\tau)$ then $w=|\mathbf{u}|_1\geq u_i>0$ and also $\ro=\Phi(w)/w>0$. Since $u_i$ solves $\partial_t u_i=\Delta(\ro u_i)=\dive(\ro\nabla u_i)+\dive(u_i\nabla\ro)$ with now smooth coefficients we conclude that $u_i$ is smooth and the proof is complete.
\end{proof}

Although degenerate, problem \eqref{eq:Cauchy_PB_w} is nonetheless diffusive in nature and we expect that the information cannot propagate backwards as confirmed by the following result. 
\begin{prop}[Persistence property]
Under the hypotheses of Theorem~\ref{theo:free_boundaries}, let us further assume that for any $M >0$ there exists $\overline{a}=\overline{a}(M )>0$ such that
$$
\forall s\in(0,M ]:\qquad 1+\overline{a}\leq \frac{s\Phi'(s)}{\Phi(s)}.
$$
Then the support $\Omega(t):=\{x:\,w(x,t)>0\}$ of $w=|\mathbf{u}|_1$ is non-contracting in time.
\end{prop}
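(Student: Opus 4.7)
The proof reduces to the scalar GPME $\partial_t w=\Delta\Phi(w)$ satisfied by $w=|\mathbf{u}|_1$ (Lemma~\ref{lem:u_solution=>w_solution} and Theorem~\ref{theo:free_boundaries}(i)), so I only need to show that $\Omega(t)$ is non-contracting for the scalar solution. Fix $t_0\geq 0$ and $x_0\in\Omega(t_0)$. For $t_0>0$, the interior H\"older continuity of $w$ (Theorem~\ref{theo:free_boundaries}(iv)) together with $w(x_0,t_0)>0$ give $r,\delta>0$ such that $w(x,t_0)\geq\delta$ on $B_r(x_0)$; for $t_0=0$, one uses the continuity of the smooth positive approximants $w^k$ from the proof of Theorem~\ref{theo:exist_sols_Dirichlet_Cauchy_PB} and passes to the limit at the end. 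The plan is to build a Barenblatt-type barrier $\underline{w}$ dominated by $w$ at $t_0$, that is a distributional subsolution of $\partial_t v=\Delta\Phi(v)$ on every slab $\R^n\times[t_0,t_1]$, and that satisfies $\underline{w}(x_0,t)>0$ for all $t\geq t_0$. The standard GPME comparison principle (extending Proposition~\ref{prop:uniqueness_weak_sols} to sub/supersolution pairs, cf.\ \cite{Va07}) will then force $w(x_0,t)\geq\underline{w}(x_0,t)>0$.

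To construct $\underline{w}$, observe that the reinforced hypothesis $s\Phi'(s)\geq(1+\overline{a})\Phi(s)$ on $(0,M]$ is equivalent to the monotonicity of $s\mapsto\Phi(s)/s^m$ with $m:=1+\overline{a}>1$; combined with the continuity of $\Phi$ and $\Phi(\delta)>0$, this gives $\Phi(s)\geq c_0 s^m$ on $(0,M]$ for some $c_0=c_0(\Phi,M,\delta)>0$. Let $B_m(y,s;\mu)$ denote the classical Barenblatt--ZKB profile of the rescaled PME $\partial_s B=c_0\Delta B^m$ with mass $\mu$ concentrated at the origin at $s=0$, and set
\[
\underline{w}(x,t):=B_m\bigl(x-x_0,\,t-t_0+\eta;\,\mu\bigr)
\]
for a small shift $\eta>0$ and a small mass $\mu>0$. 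Tuning $\eta,\mu$ yields simultaneously $\operatorname{supp}\underline{w}(\cdot,t_0)\subset B_r(x_0)$, $\underline{w}(\cdot,t_0)\leq\delta$, and $\underline{w}\leq M$ throughout $\R^n\times[t_0,t_1]$: the first two conditions give $\underline{w}(\cdot,t_0)\leq w(\cdot,t_0)$ and the last keeps $\underline{w}$ inside the range where $c_0 s^m\leq\Phi(s)$ is available. Since the support of $B_m(\,\cdot\,,t-t_0+\eta;\mu)$ is a non-empty ball centred at $x_0$ for every $t\geq t_0$, the barrier satisfies $\underline{w}(x_0,t)>0$, which is precisely what is needed.

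The hinge of the plan is the verification that $\underline{w}$ is a distributional subsolution of $\partial_t v=\Delta\Phi(v)$: on $\{\underline{w}>0\}$ we have $\partial_t\underline{w}=c_0\Delta\underline{w}^m$ classically and $c_0 s^m\leq\Phi(s)$ pointwise, but this does not by itself upgrade to the Laplacian inequality $c_0\Delta\underline{w}^m\leq\Delta\Phi(\underline{w})$ globally, because a singular measure is produced on the free boundary $\partial\{\underline{w}>0\}$ where $\nabla\underline{w}^m$ jumps. The way I would handle this is by approximation: replace $\Phi$ with smooth nonlinearities $\Phi_k$ that coincide with $c_0 s^m$ on $[0,\delta_k]$ and with $\Phi$ on $[2\delta_k,M]$ (with $\delta_k\searrow 0$) and increase monotonically to $\Phi$; for each such $\Phi_k$, $\underline{w}$ is a genuine subsolution of $\partial_t v=\Delta\Phi_k(v)$ by direct inspection (both sides agree near the free boundary), and the monotone stability of GPME sub/supersolutions passes the subsolution property to the limit $k\to\infty$. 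Granted this, the comparison principle yields $\underline{w}\leq w$ on $[t_0,t_1]$, hence $w(x_0,t)>0$ for all $t\in[t_0,t_1]$; since $t_1$ was arbitrary, the proof is complete. The main technical obstacle is precisely this subsolution verification, including the careful choice of $\eta,\mu$ and the monotone approximation scheme for $\Phi$.
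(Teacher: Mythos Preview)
There is a genuine error in the algebraic step. From the hypothesis $s\Phi'(s)\geq (1+\overline{a})\Phi(s)$ on $(0,M]$ you correctly deduce that $s\mapsto \Phi(s)/s^m$ is nondecreasing with $m=1+\overline{a}$. But this yields the \emph{opposite} inequality near zero: for $0<s\leq \delta$ one gets $\Phi(s)/s^m\leq \Phi(\delta)/\delta^m$, hence $\Phi(s)\leq C s^m$, not $\Phi(s)\geq c_0 s^m$. In fact, for any $\Phi(s)=s^{m'}$ with $m<m'\leq 1/a$ (allowed by \eqref{eq:structural_diffusion_hyp} and the extra hypothesis) one has $\Phi(s)/s^m\to 0$ as $s\to 0$, so no uniform lower bound $\Phi(s)\geq c_0 s^m$ on $(0,M]$ can exist. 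This error propagates: your approximation scheme requires $\Phi_k\nearrow\Phi$ with $\Phi_k=c_0 s^m$ near $0$, which forces $c_0 s^m\leq \Phi(s)$ there and is therefore ill-posed. Even if one switches to the exponent $1/a$ (for which \eqref{eq:structural_diffusion_hyp} does give $\Phi(s)\geq c_0 s^{1/a}$ on $(0,M]$), the pointwise inequality $\Psi\leq\Phi$ with $\Psi(s)=c_0 s^{1/a}$ does \emph{not} imply $\Delta\Psi(\underline w)\leq \Delta\Phi(\underline w)$ distributionally; the sign of $\Delta[\Phi(\underline w)-\Psi(\underline w)]$ in the interior is uncontrolled, so the Barenblatt $\underline w$ is not obviously a subsolution of the GPME.

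The paper avoids all of this by a one-line reduction. Since $0\leq w\leq M$ with $M=\|w^0\|_{L^\infty}$, the extra hypothesis lets one build a $\mathcal{C}^1$ nonlinearity $\overline\Phi$ that agrees with $\Phi$ on $[0,M]$ and satisfies the \emph{global} slow-diffusion condition \eqref{eq:slow_diffusion_Sa} for some $a=a(M)\in(0,1)$; then $w$ solves $\partial_t w=\Delta\overline\Phi(w)$ as well, and the persistence of positivity follows directly from the Harnack inequality of Dahlberg--Kenig \cite{DK86} (see also \cite[Corollary~1.5]{dPV91}). No barrier construction is needed.
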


\begin{remark}
This (weaker) degeneracy condition is easily checked for the Freundlich isotherm. In fact,  $s\Phi'(s)/\Phi(s)=\beta(r)/(r\beta'(r))$ is strictly greater than one for $\beta_f(r)=\phi r+(1-\phi)r^p$ in any finite interval (but not in the limit $s\to\infty$ because $\Phi_f(s)$ becomes linear).
\end{remark}
\begin{proof}
Dahlberg and Kenig \cite{DK86}  proved that nonnegative solutions to \eqref{eq:Cauchy_PB_w} satisfy certain Harnack inequality provided the strong condition \eqref{eq:slow_diffusion_Sa} holds. In this case positivity of $w$ at $(x_0,t_0)$ implies positivity at $(x_0,t)$ for all later $t\geq t_0$ and the support is  non-contracting. Unfortunately, as already discussed, \eqref{eq:slow_diffusion_Sa} does not hold globally in $s>0$.\\
In order to tackle this technical detail we recall from Theorem~\ref{theo:free_boundaries} that $0\leq w\leq M$, with $M=\|w^0\|_{L^{\infty}(\R^n)}$. 
Now the assumption on $\Phi(s)$ allows one to construct a $\mathcal{C}^1$-function $\overline{\Phi}$  which satisfies \eqref{eq:slow_diffusion_Sa} globally in $s>0$ for some $a=a(M)\in(0,1)$ and  such that $\overline{\Phi}(s)=\Phi(s)$ for all $s\in [0,M]$. By construction $w$ is a solution to $\partial_t w=\Delta \overline{\Phi}(w)$, and the assertion follows from \cite{DK86} (for a precise statement see e.g. \cite[Corollary 1.5]{dPV91}).
\end{proof}

We end this section with a ``divide and rule'' result.
\begin{prop}
Assume that \eqref{eq:monotonicity_b}, \eqref{eq:structural_diffusion_hyp}, \eqref{eq:convexity_hyp} and \eqref{eq:slow_diffusion_beta_m} hold true for some $a\in(0,1)$ and $m>1$. Let $k\in \{1,\ldots,N\}$, $\hat{\mathbf{u}}^0=(u_1^0,\ldots,u_k^0)\in L^{\infty}(\R^n;\R^k)$ and $\check{\mathbf{u}}^0=(u_{k+1}^0,\ldots,u_N^0)\in L^{\infty}(\R^n;\R^{N-k})$ be non-negative and compactly supported, with $d=\dist(\operatorname{supp}(\hat{\mathbf{u}}^0),\operatorname{supp}(\check{\mathbf{u}}^0))>0$. Set $\mathbf{u}^0=(\hat{\mathbf{u}}^0,\check{\mathbf{u}}^0)\in L^{\infty}(\R^n;\R^{k+(N-k)})$. Moreover, let $\hat{\mathbf{u}}(x,t)$ be the unique solution of the $k$-dimensional Cauchy problem \eqref{eq:Cauchy_PB_u} with the initial data $\hat{\mathbf{u}}^0$, $\check{\mathbf{u}}(x,t)$ the unique solution of the $(N-k)$-dimensional Cauchy problem \eqref{eq:Cauchy_PB_u} with the initial data $\check{\mathbf{u}}^0$ and assume finally  that $\mathbf{u}(x,t)$ is the unique solution of the $N$-dimensional Cauchy 
problem \eqref{eq:Cauchy_PB_u} with the
initial data $\mathbf{u}^0=(\hat{\mathbf{u}}^0,\check{\mathbf{u}}^0)$. Then there exists $T>0$ such that $\mathbf{u}\equiv (\hat{\mathbf{u}},\check{\mathbf{u}})$ in $Q^T=\R^n\times(0,T)$. More precisely, if $\hat{w}$ and $\check{w}$ are the two solutions of the scalar Cauchy problem \eqref{eq:Cauchy_PB_w} with the respective initial data $\hat{w}^0=|\hat{\mathbf{u}}^0|_{l^1(\R^k)}$ and $\check{w}^0=|\check{\mathbf{u}}^0|_{l^1(\R^{N-k})}$, then
$$
T=\inf\{t\geq 0:\   \operatorname{supp} \hat{w}(\,.\,,t)\cap \operatorname{supp} \check{w}(\,.\,,t) \neq \emptyset\}\in(0,\infty].
$$
\label{prop:divide_rule}
\end{prop}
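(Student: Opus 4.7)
The plan is to reduce the statement to the uniqueness part of Theorem~\ref{theo:free_boundaries} by verifying that the concatenation $\tilde{\mathbf{u}}(x,t):=(\hat{\mathbf{u}}(x,t),\check{\mathbf{u}}(x,t))$ is itself a non-negative bounded very weak solution of the $N$-dimensional Cauchy problem \eqref{eq:Cauchy_PB_u} on $Q^T$. Since \eqref{eq:Cauchy_PB_u} has a unique such solution, this will immediately give $\mathbf{u}\equiv\tilde{\mathbf{u}}$ in $Q^T$ and simultaneously identify $T$ as the stated infimum.

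First I would argue that $T>0$. By Proposition~\ref{prop:FB_Cauchy_PB_w_Rn} applied separately to $\hat{w}$ and $\check{w}$, the scalar solutions of \eqref{eq:Cauchy_PB_w} with initial data $\hat{w}^0=|\hat{\mathbf{u}}^0|_1$ and $\check{w}^0=|\check{\mathbf{u}}^0|_1$ remain compactly supported with radii of the form $R_0+C_1 t^\lambda$, and more precisely $\operatorname{supp}\hat{w}(\cdot,t)\subseteq \{x:\operatorname{dist}(x,\operatorname{supp}\hat{w}^0)\leq C_1 t^\lambda\}$ (translate Proposition~\ref{prop:FB_Cauchy_PB_w_Rn} by covering the support with small balls). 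A symmetric bound holds for $\check{w}$, and since the initial supports are at distance $d>0$, the two supports remain disjoint for all $t$ sufficiently small, so the infimum defining $T$ is strictly positive.

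Next I would exploit the disjoint-support structure. For every $(x,t)\in Q^T$ at most one of $\hat{w}(x,t),\check{w}(x,t)$ is nonzero. Because $0\leq \hat{u}_i\leq |\hat{\mathbf{u}}|_1=\hat{w}$ and analogously for $\check{u}_j$, the supports of the individual components are contained in those of $\hat{w}$ and $\check{w}$ respectively, so on $Q^T$ the two blocks of $\tilde{\mathbf{u}}$ are supported in disjoint sets. Consequently $|\tilde{\mathbf{u}}|_1=\hat{w}+\check{w}$, and on $\{\hat{w}>0\}$ we have $|\tilde{\mathbf{u}}|_1=\hat{w}$ while on $\{\check{w}>0\}$ we have $|\tilde{\mathbf{u}}|_1=\check{w}$. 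Writing $\tilde{\rho}:=\Phi(|\tilde{\mathbf{u}}|_1)/|\tilde{\mathbf{u}}|_1$, $\hat{\rho}:=\Phi(\hat{w})/\hat{w}$ and $\check{\rho}:=\Phi(\check{w})/\check{w}$, the pointwise identities
\[
\tilde{\rho}\,\tilde{u}_i = \hat{\rho}\,\hat{u}_i \quad (i\leq k),\qquad \tilde{\rho}\,\tilde{u}_i = \check{\rho}\,\check{u}_i\quad (i>k),
\]
hold a.e.\ in $Q^T$, since on each component's support the relevant scalar $|\tilde{\mathbf{u}}|_1$ coincides with either $\hat{w}$ or $\check{w}$, and off the support the component itself vanishes so both sides do. (Here I use $\Phi(0)=0$ so that no ambiguity arises from the definition of $\rho$ at $|\mathbf{u}|_1=0$.) Plugging these identities into the very weak formulation \eqref{eq:weak_form_u} of the $N$-dimensional problem, the equation for $\tilde{u}_i$ is exactly the weak equation for $\hat{u}_i$ (resp.\ $\check{u}_i$) in the lower-dimensional Cauchy problem, which holds by hypothesis. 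Hence $\tilde{\mathbf{u}}$ is a non-negative bounded very weak solution of the $N$-dimensional Cauchy problem on $Q^T$, and uniqueness from Theorem~\ref{theo:free_boundaries} finishes the proof.

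The main obstacle is the verification in the previous paragraph that the nonlinear flux $\tilde{\rho}\tilde{u}_i$ for the concatenated solution correctly decouples into the two independent fluxes; this is elementary once disjointness of supports is known, but it critically uses the fact that $|\tilde{\mathbf{u}}|_1$ reduces to either $\hat{w}$ or $\check{w}$ wherever it matters, together with $\Phi(0)=0$. Everything else (positivity of $T$, uniqueness, reduction of $N$-dim weak form to two independent weak forms) is then routine.
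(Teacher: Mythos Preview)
Your argument is correct and follows essentially the same route as the paper: define $\tilde{\mathbf{u}}=(\hat{\mathbf{u}},\check{\mathbf{u}})$, use the disjointness of $\operatorname{supp}\hat{w}(\cdot,t)$ and $\operatorname{supp}\check{w}(\cdot,t)$ on $[0,T)$ to see that $|\tilde{\mathbf{u}}|_1$ coincides with $\hat w$ or $\check w$ on the relevant supports (so the nonlinear flux $\tilde\rho\,\tilde u_i$ decouples), check that $\tilde{\mathbf{u}}$ is a very weak solution of the $N$-dimensional problem, and conclude by uniqueness. One small imprecision: the inclusion $\operatorname{supp}\hat w(\cdot,t)\subseteq\{x:\operatorname{dist}(x,\operatorname{supp}\hat w^0)\leq C_1 t^\lambda\}$ does not follow directly from Proposition~\ref{prop:FB_Cauchy_PB_w_Rn} with a universal $C_1$ (the constant there depends on $R_0$), but for $T>0$ it suffices to enclose each initial patch in a ball and use the stated ball-growth estimate, or simply invoke finite speed of propagation as the paper does.
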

\begin{remark}
As is clear from the above definition, $T$ is the first time when the supports of $\hat{w},\check{w}$ touch. Our statement can be reformulated simply  as: if the initial data can be separated into two distinct patches of $k$ and $N-k$ species then it suffices to solve  two independent  lower dimensional systems of order $k$ and $N-k$ as long as their respective supports do not touch. Note that we do not claim anything for what happens after $t=T$ because when the supports touch the two patches start interacting and the situation becomes more involved.
\end{remark}
\begin{proof}
From Theorem~\ref{theo:free_boundaries} it is clear that the supports of $\hat{\mathbf{u}}(\,.\,,t),\check{\mathbf{u}}(\,.\,,t)$ propagate with finite speed, which implies  that $T>0$. Letting now $\tilde{\mathbf{u}}=(\hat{\mathbf{u}},\check{\mathbf{u}})\in L^{\infty}(\R^n\times(0,T);\R^{k+(N-k)})$, we observe that by definition of $T$
$$
(x,t)\in \operatorname{supp}(\hat{\mathbf{u}})\cap\{t\leq T\} \quad \Rightarrow \quad 
\left\{
\begin{array}{ll}
\tilde{\mathbf{u}}=(\hat{\mathbf{u}},0)\\
|\tilde{\mathbf{u}}|_{l^1(\R^N)}=|(\hat{\mathbf{u}},0)|_{l^1(\R^{k+(N-k)})}=\left|\hat{\mathbf{u}}\right|_{l^1(\R^{k})}
\end{array}
\right.
$$
and
$$
(x,t)\in \operatorname{supp}(\check{\mathbf{u}})\cap\{t\leq T\} \quad \Rightarrow \quad 
\left\{
\begin{array}{ll}
\tilde{\mathbf{u}}=(0,\check{\mathbf{u}})\\
|\tilde{\mathbf{u}}|_{l^1(\R^N)}=|(0,\check{\mathbf{u}})|_{l^1(\R^{k+(N-k)})}=\left|\check{\mathbf{u}}\right|_{l^1(\R^{N-k})}
\end{array}
\right. .
$$
It is then easy to check that $\tilde{\mathbf{u}}=(\hat{\mathbf{u}},\check{\mathbf{u}})$ solves the global $N$-dimensional system in $Q^T$ (but a priori \emph{not} for later times). From uniqueness in Theorem~\ref{theo:exist_sols_Dirichlet_Cauchy_PB} (restricted to finite time intervals) we conclude that $\tilde{\mathbf{u}}=\mathbf{u}$ in $Q^T$.
\end{proof}
%
%

One can refine Proposition~\ref{prop:divide_rule}  by considering an arbitrary number $j$, say, of initial patches as 
follows: 1) as long as the supports do not intersect, solve $j$ independent systems, 2) when two or more patches touch, glue them together into a single  higher-dimensional patch and resume with $j'<j$  patches, 3) keep iterating as long as there is more than one patch left. 


This ``divide and rule'' behaviour should lead to infinite speed of propagation (discontinuity in time) of the supports of each individual species, even though the support of $w=|\mathbf{u}|_1$ does propagate with finite speed as stated in Theorem~\ref{theo:free_boundaries}. Consider for example the case of only two species $\mathbf{u}=(u_1,u_2)$  with the initial (compact) supports at a positive distance from each other. We know from Proposition~\ref{prop:divide_rule} that we only have to solve two scalar problems as long as the supports do not intersect. Assume that this happens at $t=T$ and that at this particular moment the two supports look like two tangent balls (thus only one species is present in each ball). If at least one of the balls were expanding at time $t=T^-$,  then for $t=T^+$ the balls should intersect and the support of $w$ would thus look like an $8$-shaped domain with a thin but non-empty interior neck connecting 
the balls. Moreover, the diffusion coefficient 
$\ro =\frac{\Phi(w)}{w}$ becomes positive in the entire $8$-shaped domain, in particular in the neck. Since $\partial_t u_i=\dive(\ro \nabla u_i)+(\ldots)$ with the support of $u_i$ in either of the two balls at time $t=T$, the diffusion occurring in the neck  will ensure infinite speed of propagation between the two balls. Thus the support of $u_i$ should jump from only one ball at $t=T$ to the whole $8$-shaped domain at $t=T^+$. This instantaneous invasion phenomenon is beyond the scope of this paper and will be investigated elsewhere.

\bibliographystyle{alpha}

\end{document}